\RequirePackage{silence}
\documentclass[a4paper]{article}
\usepackage[a4paper, total={6.5in, 8.66in}]{geometry}

\usepackage{graphicx}
\usepackage{xcolor}
\usepackage{enumitem}
\usepackage{amsmath}
\usepackage{amssymb}
\usepackage{url}
\usepackage[capitalise]{cleveref}
\usepackage{siunitx}
\usepackage{autonum} 
\usepackage{amsthm}
\usepackage{todonotes}
\usepackage{booktabs}
\usepackage{pifont}
\theoremstyle{plain} 
\newtheorem{theorem}{Theorem}
\newtheorem{proposition}{Proposition}
\newtheorem{lemma}{Lemma}
\theoremstyle{definition} 
\newtheorem{definition}{Definition}
\theoremstyle{remark}
\newtheorem{remark}{Remark}
\newcommand{\papertitle}{Hyperelastic constitutive model discovery with differentiable finite elements and structure-preserving neural networks}

\newcommand{\Id}{\mathbf{I}}
\newcommand{\Zr}{\mathbf{0}}
\newcommand{\ones}[1]{\mathbf{1}_{#1}}
\newcommand{\Rplus}{(0,+\infty)}
\newcommand{\RplusZ}{[0,+\infty)}
\newcommand{\dV}{\,\mathrm{d}V}
\newcommand{\dS}{\,\mathrm{d}\sigma}
\newcommand{\Cont}[1]{\mathcal{C}^{#1}}
\newcommand{\cof}{\operatorname{cof}}
\newcommand{\trace}{\operatorname{tr}}
\newcommand{\argmin}[1]{\underset{#1}{\operatorname{argmin}} }

\newcommand{\Var}{\operatorname{Var}}
\newcommand{\Exp}[1]{\mathbb{E}\left[#1\right]}

\newcommand{\Dim}{d}

\newcommand{\deformationMap}{\chi}
\newcommand{\x}{\mathbf{x}}
\newcommand{\displ}{\mathbf{d}}
\newcommand{\F}{\mathbf{F}}
\newcommand{\C}{\mathbf{C}}

\newcommand{\R}{\mathbf{R}}
\newcommand{\invI}{I_1}
\newcommand{\invII}{I_2}
\newcommand{\isoinvI}{\bar{I}_1}
\newcommand{\isoinvII}{\bar{I}_2}
\newcommand{\J}{J}
\newcommand{\W}{\mathcal{W}}
\newcommand{\Winv}{\mathcal{W}_{\mathrm{I}}}
\newcommand{\Wisoinv}{\mathcal{W}_{\bar{\mathrm{I}}}}
\newcommand{\Wpoly}{g}
\newcommand{\Wpnn}{\mathcal{W}_{\mathrm{PNN}}}
\newcommand{\Whnn}{\mathcal{W}_{\mathrm{HNN}}}
\newcommand{\Wbase}{\mathcal{W}_{\mathrm{base}}}
\newcommand{\Wgrow}{\mathcal{W}_{\mathrm{vol}}}
\newcommand{\Piola}{\mathbf{P}}

\newcommand{\loadBody}{\mathbf{f}}
\newcommand{\loadTraction}{\mathbf{h}}
\newcommand{\loadDispl}{\mathbf{g}}
\newcommand{\normal}{\mathbf{n}}
\newcommand{\BoundaryN}{\Gamma_{N}}
\newcommand{\BoundaryD}{\Gamma_{D}}
\newcommand{\reaction}[2]{R_{{\W}}^{#1#2}}

\newcommand{\spaceV}{V}
\newcommand{\spaceVZ}{V_0}
\newcommand{\spaceVh}{V_h}
\newcommand{\spaceVZh}{V_{h,0}}
\newcommand{\displTest}{\mathbf{v}}

\newcommand{\numOBS}{N_{\mathrm{exp}}}
\newcommand{\numOBSpts}[1]{N_{\mathrm{pts}}^{#1}}
\newcommand{\numOBSreaction}[1]{N_{\mathrm{reac}}^{#1}}
\newcommand{\displOBS}[2]{\mathbf{d}_{\mathrm{obs}}^{#1#2}}
\newcommand{\ptsOBS}[2]{\mathbf{x}^{#1#2}}
\newcommand{\reactionOBS}[2]{R_{\mathrm{obs}}^{#1#2}}

\newcommand{\stdNoise}{\sigma_{\mathrm{noise}}}

\newcommand{\SymmetryGroup}{\mathcal{G}}
\newcommand{\GLplus}{\mathrm{GL}^{+}(\Dim)}
\newcommand{\SO}{\mathrm{SO}(\Dim)}

\newcommand{\numDOF}{N_h}
\newcommand{\displDOF}{\underline{\mathbf{d}}}
\newcommand{\displDOFadjoint}{\underline{\mathbf{q}}}
\newcommand{\basisfun}[1]{\boldsymbol{\varphi}_{#1}}

\newcommand{\bilinearResidual}{r_{{\W}}}

\newcommand{\spaceW}{\mathcal{H}}

\newcommand{\numLay}{L}
\newcommand{\numNeur}[1]{n_{#1}}
\newcommand{\neur}{\mathbf{z}}
\newcommand{\neurOut}{z_{\mathrm{out}}}
\newcommand{\bias}[1]{\mathbf{b}_{#1}}
\newcommand{\weightOUT}{\mathbf{w}_{\mathrm{out}}}
\newcommand{\weightI}[1]{\mathbf{w}^1_{#1}}
\newcommand{\weightII}[1]{\mathbf{w}^2_{#1}}
\newcommand{\weightJ}[1]{\mathbf{w}^J_{#1}}
\newcommand{\weightZ}[1]{\mathbf{W}_{#1}}
\newcommand{\weightGrowth}{w_{\mathrm{vol}}}

\newcommand{\BaseWeightOUT}{\widetilde{\mathbf{w}}_{\mathrm{out}}}
\newcommand{\BaseWeightI}[1]{\widetilde{\mathbf{w}}^1_{#1}}
\newcommand{\BaseWeightII}[1]{\widetilde{\mathbf{w}}^2_{#1}}
\newcommand{\BaseWeightZ}[1]{\widetilde{\mathbf{W}}_{#1}}
\newcommand{\BaseWeightGrowth}{\widetilde{w}_{\mathrm{vol}}}

\newcommand{\Wconst}{\overline{\mathcal{W}}}
\newcommand{\act}{\rho}
\newcommand{\pos}{\tau}
\newcommand{\stressfactor}{\omega}
\newcommand{\stdInit}{\sigma_{\mathrm{init}}}
\newcommand{\NNparams}{\mathbf{w}}
\newcommand{\NNnumparams}{{N_w}}

\newcommand{\expW}{\overline{w}}

\newcommand{\DiscreteBilinearResidual}{\mathbf{r}}
\newcommand{\lossDispl}{\mathcal{L}}
\newcommand{\lossParam}{\widehat{\mathcal{L}}}

\newcommand{\figureSetupTwoD}{
\begin{figure}
    \vspace{-1.8cm}
    \includegraphics[trim=0 1.5cm 0 0, clip, page=1]{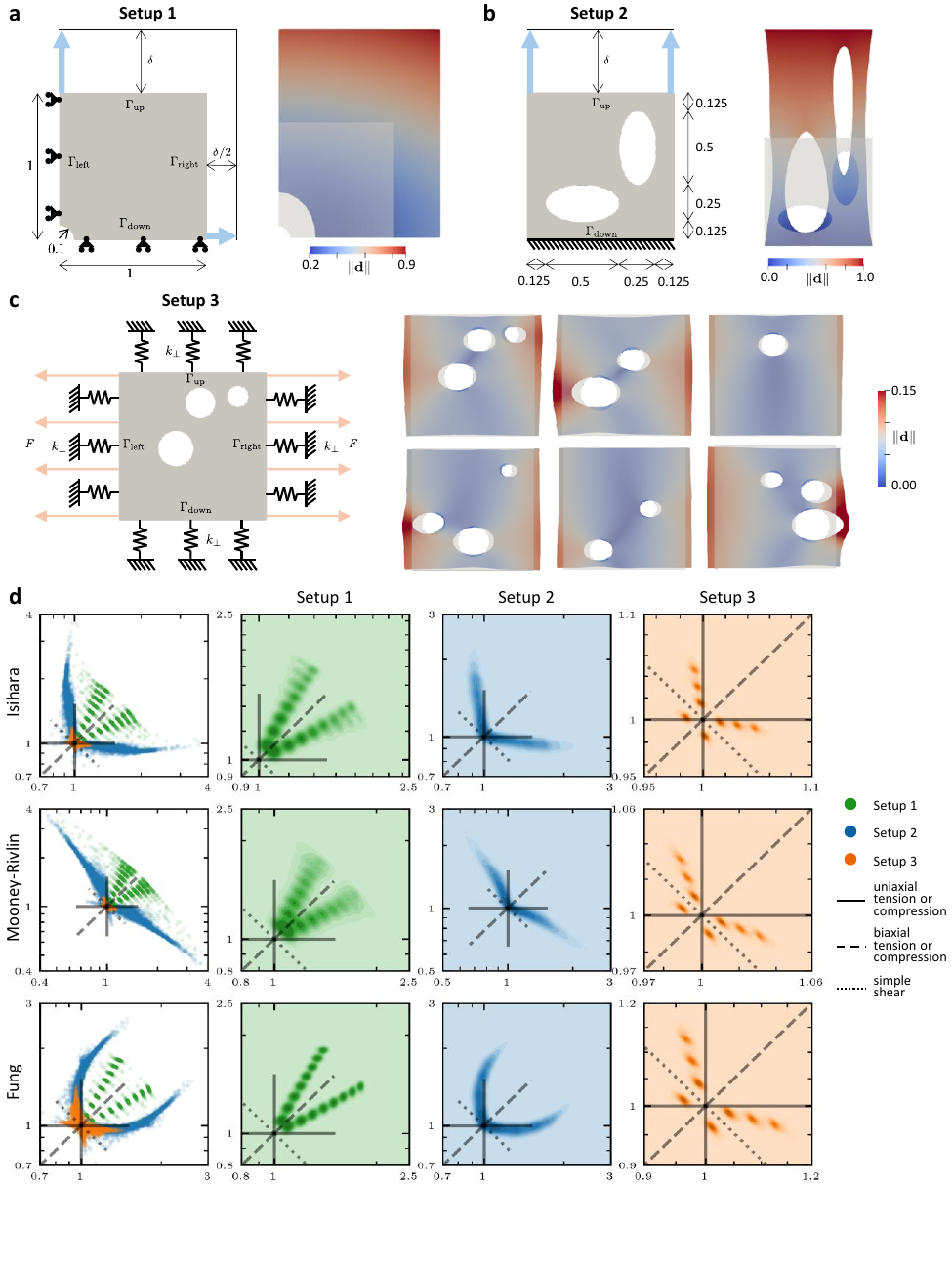}
    \vspace{-.7cm}
    \caption{
        \textbf{2D setups.} 
        (a)--(b)--(c): Graphical representation of Setups 1--2--3 (left), and corresponding sample solutions obtained with the IH model (right). 
        For Setup~3, six representative domains are shown.
        Light blue arrows represent displacement-controlled loads, whereas light orange arrows denote applied forces per unit surface.
        (d): Statistical distributions of the principal stretches {$(\lambda_1,\lambda_2)$} for the three setups considered. 
        The scatter plots in the first column show the relative location, in the principal-stretch plane, of the deformation states obtained for the three setups, while the remaining three columns detail the principal-stretch distributions for each setup, displayed via kernel density estimation.
        For clarity of visualization, the principal stretches are plotted without enforcing an ordering, i.e. treating the two stretches symmetrically.
        Black lines correspond to canonical deformations described in \ifappendix Appendix~\ref{sec:canonical_deformations}\else Supplementary Information\fi.
        }
    \label{fig:test_cases_2D}
\end{figure}
}

\newcommand{\figureSetupThreeD}{
\begin{figure}
    \includegraphics[trim=0 4.38cm 0 0, clip, page=2]{figs.pdf}
    \caption{
        \textbf{3D setups.} 
        (a)--(b)--(c): Graphical representation of Setups 4--5--6 (top), and corresponding sample solutions obtained with the IH model (bottom). 
        Light blue arrows represent displacement-controlled loads, whereas light orange arrows denote applied forces per unit surface.
        (d): Statistical distributions of the principal stretches {$(\lambda_1,\lambda_2,\lambda_3)$} for the three setups considered. 
        The three scatter plots in the first row show the relative location, in principal-stretch space, of the deformation states obtained for the three setups, for the principal stretch triplet $(\lambda_1,\lambda_2,\lambda_3)$, and for the $(\lambda_1,\lambda_2)$ and $(\lambda_1,\lambda_3)$ pairs, respectively. 
        The second row shows the corresponding principal-stretch distributions for each setup, for the $(\lambda_1,\lambda_2)$ and $(\lambda_1,\lambda_3)$ pairs, displayed via kernel density estimation. 
        For clarity of visualization, the principal stretches are plotted without enforcing an ordering, i.e. treating the stretches symmetrically.
        Black lines correspond to the canonical uniaxial tension or compression deformations.
        }
    \label{fig:test_cases_3D}
\end{figure}
}

\title{\papertitle}
\author{Francesco Regazzoni$^{1}$}
\date{\footnotesize $^1$ MOX, Department of Mathematics, Politecnico di Milano, Milan, Italy}

\newif\ifappendix
\appendixtrue

\begin{document}
\maketitle

\begin{abstract}
The discovery of constitutive laws from experimentally accessible measurements is a central problem in nonlinear computational mechanics. Many data-driven constitutive identification approaches rely either on paired strain-stress data or on full-field displacement measurements, both of which are difficult to obtain in realistic three-dimensional settings. We present a differentiable finite element framework for the discovery of hyperelastic material laws from partial observations, including boundary-only displacement measurements and global reaction forces. The method embeds the nonlinear finite element equilibrium problem directly into the learning loop, so that candidate strain-energy densities are assessed through the deformation fields and reactions they induce. This formulation enforces mechanical equilibrium as a constraint and allows the loss function to be evaluated only at observed locations. To ensure physical admissibility and promote numerical solvability throughout training, the constitutive response is represented by Hyperelastic Neural Networks, a structure-preserving neural class that enforces residual energy and stress-free conditions, frame indifference, isotropic material symmetry, polyconvexity, coercivity, and controlled volumetric growth by construction. The resulting PDE-constrained learning problem is solved using a quasi-Newton strategy combined with continuation and solver-aware backtracking. Numerical experiments in two- and three-dimensional finite elasticity demonstrate accurate recovery of hyperelastic isotropic responses from boundary-only data, robustness to measurement noise, and generalization across geometries, loading conditions, and boundary conditions.
\end{abstract}

\section{Introduction}
\label{sec:intro}

Constitutive modeling is a central component of nonlinear solid mechanics. 
In hyperelasticity, the mechanical response of a material is encoded by a strain-energy density, from which the stress field and the equilibrium configuration of a body follow \cite{antman2005nonlinear,gurtin2010mechanics}. 
In standard practice, the functional form of this energy is selected \emph{a priori} from a family of constitutive models and only a limited number of material parameters are calibrated from experiments. 
While effective in many engineering applications, this approach can be restrictive when the true material response is complex, when the relevant deformation modes are not known in advance, or when the available tests do not provide sufficient information to justify a prescribed model structure.
More recently, a new data-driven paradigm has emerged that relaxes the need to prescribe a closed-form constitutive family in advance, and instead infers the material response from data \cite{kirchdoerfer2016data,mahmoudabadbozchelou2024unbiased,holthusen2025generalized}.

This shift aligns with a broader movement toward the automated discovery of governing physical laws from data \cite{karniadakis2021physics,champion2019data,zhou2018learning,cranmer2020discovering,iten2020discovering}, spanning approaches such as sparse identification of nonlinear dynamics \cite{brunton2016discovering,champion2019data}, symbolic regression \cite{udrescu2020ai}, and neural networks \cite{raissi2020hidden,cranmer2020lagrangian}.
In solid mechanics, constitutive law identification can be viewed as a particular instance of this more general scientific discovery problem, where the objective is to infer the local material behavior underlying observed macroscopic responses.

Within this paradigm, a distinction is commonly made between \emph{model-free} and \emph{model discovery} approaches for mechanics.
Model-free methods \cite{kirchdoerfer2016data,conti2018data,kirchdoerfer2018data,stainier2019model} do not yield an explicit constitutive relationship; instead, a collection of recorded strain--stress pairs is used to infer the material response online, by locally approximating the behavior based on data points closest to the current state of each material point.
While effective in certain settings, these approaches do not explicitly identify a reusable constitutive model and are typically both data-hungry and memory-intensive, as the full dataset must be queried during the simulation.

In contrast, model discovery methods aim to identify an explicit constitutive model in the classical sense, yet without prescribing its functional form \emph{a priori}.
Such models may be based on symbolic regression techniques \cite{flaschel2021unsupervised,linka2021constitutive,flaschel2022discovering,flaschel2023automated,flaschel2023automatedbrain}, where the constitutive law is discovered by combining elements from a library of simple mathematical expressions, or on machine learning models such as neural networks \cite{shen2004neural,liang2008neural,huang2020learning,thakolkaran2022nn}, Gaussian processes \cite{frankel2020tensor}, or neural ordinary differential equations \cite{tac2022data}.
Symbolic approaches offer enhanced interpretability, whereas machine learning models usually provide superior expressive power.
Moreover, several architectures \cite{vlassis2020geometric,linka2021constitutive,fernandez2021anisotropic,tac2022data,klein2022polyconvex,thakolkaran2022nn} have been proposed to automatically embed physical and mathematical constraints into the learned model, thereby restricting the hypothesis space \emph{a priori} (for a comprehensive overview in this respect, see \cite{linden2023neural}).

Regardless of whether the model is symbolic or black-box, existing methods also differ in the type of data they require.
A large part of the constitutive-learning literature assumes access to local strain--stress data \cite{linka2023new,tac2022data}.
However, such data are often unrealistic to obtain in practice: stress measurements are typically available only in simple mechanical tests, such as uniaxial tension, which fail to capture the full complexity of constitutive behavior.
Indeed, the simultaneous measurement of all tensorial components of strain and stress is practically infeasible.

A more realistic experimental setting involves direct measurements of the displacement field of a specimen, for instance obtained via digital image correlation (DIC) techniques \cite{gdoutos2022DIC}, together with global force measurements recorded during the deformation process.
The Virtual Field Method (VFM)~\cite{grediac2006virtual} has been leveraged for addressing material discovery problems in this setting \cite{huang2020learning}, e.g. by the EUCLID method \cite{flaschel2021unsupervised,flaschel2022discovering}.
The VFM identifies the constitutive model that minimizes the residual of the force balance, given the measured displacement field.
The methods belonging to this class have enabled significant progress in stress-label-free constitutive identification and material model discovery.
Still, because the measured displacement field enters the weak form directly, these approaches typically require sufficiently rich displacement information over the region where the equilibrium residual is evaluated.
Moreover, strains must be obtained from measured displacements, which can make the identification sensitive to measurement noise and may require smoothing or denoising procedures.
This requirement may become restrictive when observations are incomplete, defective, or limited to a subset of the boundary, as may occur in three-dimensional experiments where internal displacement measurements are not available.
A related full-field-data requirement appears in ADiMU, a differentiable framework for history-dependent constitutive laws in which global model discovery relies on an equilibrium force residual evaluated from the measured full-field displacement history \cite{ferreira2025automatically}.
{Material Fingerprinting \cite{flaschel2026material,flaschel2026unsupervised} has shown that constitutive models can be identified from a limited set of surface displacement measurements and global reaction forces, although through a fundamentally different lookup-table strategy based on a precomputed database for a prescribed experimental setup.}

In this work, in order to relax the full-field displacement requirement, we address {hyperelastic} material model discovery by embedding a differentiable finite element solver for the equilibrium equations directly into the training process, thereby establishing a direct link between candidate constitutive laws and the resulting deformation fields.
In the considered approach, which we denote by Neural Differentiable Finite Element Method (Neural-DFEM), the loss function is formulated in terms of the discrepancy between available measurements and model predictions on the training specimens.
Estimating material model parameters within a PDE-constrained setting, as a matter of fact, is a well-established approach in the community, known as finite element model updating (FEMU) \cite{avril2008overview,chen2025finite,kumar2025comparative}.
FEMU has been widely adopted in a variety of settings, thanks to its flexibility \cite{avril2008overview}.
Here, we leverage this PDE-constrained setting in combination with a neural-network-based constitutive ansatz, using automatic differentiation through the neural network together with adjoint-based differentiation through the finite element equilibrium problem to efficiently compute gradients and train the model.
After the initial arXiv posting of the present work, a closely related approach, developed in parallel, was presented in \cite{knipper2026finite}, which combines neural constitutive models with a differentiable finite element framework within a FEMU-type identification setting, focusing on end-to-end automatic differentiation and identification from experimental deformation data.

However, embedding a differentiable solver within the training loop introduces two main challenges. 
The first challenge arises from the requirement that the elastostatic equilibrium problem associated with each intermediate candidate model remain solvable throughout the optimization process; failure of the finite element solver at any stage would lead in fact to training breakdown.
To this end, we rely on a neural network ansatz, termed Hyperelastic Neural Network (HNN), which provides mathematically provable guarantees of physical and thermodynamic consistency, as well as sufficient conditions for {the existence of equilibria}.

The second challenge concerns computational cost and robustness with respect to initialization.
This issue is addressed through the use of a quasi-Newton optimization strategy, which significantly reduces the number of training epochs required for convergence, combined with a continuation strategy during the initial stages of training. 
In addition, to mitigate the local convergence properties of the Newton solver employed for equilibrium, a backtracking strategy is incorporated into the line search of the quasi-Newton training algorithm, automatically rejecting parameter updates that would move the system outside the Newton basin of attraction.
Finally, we propose a carefully designed initialization strategy for the neural network weights.
These numerical strategies together promote robustness and computational efficiency of Neural-DFEM.

The contributions of this work are the following:
\begin{itemize}
    \item We formulate a practical equilibrium-constrained learning procedure for neural constitutive model discovery {for hyperelastic materials} from partial mechanical observations. This is achieved by combining automatic differentiation, adjoint-based gradient computation, quasi-Newton optimization, continuation, and solver-aware backtracking to obtain stable solver-in-the-loop training.
    \item We introduce a structure-preserving neural network constitutive class that enforces the fundamental requirements of a hyperelastic law. 
    Several of these requirements are by now well established in the literature on physics-constrained neural constitutive modeling \cite{linden2023neural}; here, we additionally incorporate conditions ensuring {the existence of equilibria}, which is crucial for the numerical solvability across the whole training trajectory. 
    In particular, we prove a M{\"u}ller-type coercivity, which together with the other properties guarantees the existence of solutions to the equilibrium problem.
    \item We show, on two- and three-dimensional finite-elasticity benchmarks, that the resulting framework can learn hyperelastic responses from incomplete observations, including boundary-only displacement data and global reaction forces, and can generalize across geometries, loading paths, and boundary conditions.
\end{itemize}

The numerical results are intended as a controlled validation of the proposed framework under known ground-truth material laws and prescribed observation operators.
They show that differentiable finite element solvers, coupled with physically constrained neural constitutive models, provide a viable route to hyperelastic model discovery under partial observability.
An overview of the Neural-DFEM framework is shown in Fig.~\ref{fig:graphical_abstract}.

\begin{figure}
    \includegraphics[trim=0 5.65cm 0 0, clip, page=6]{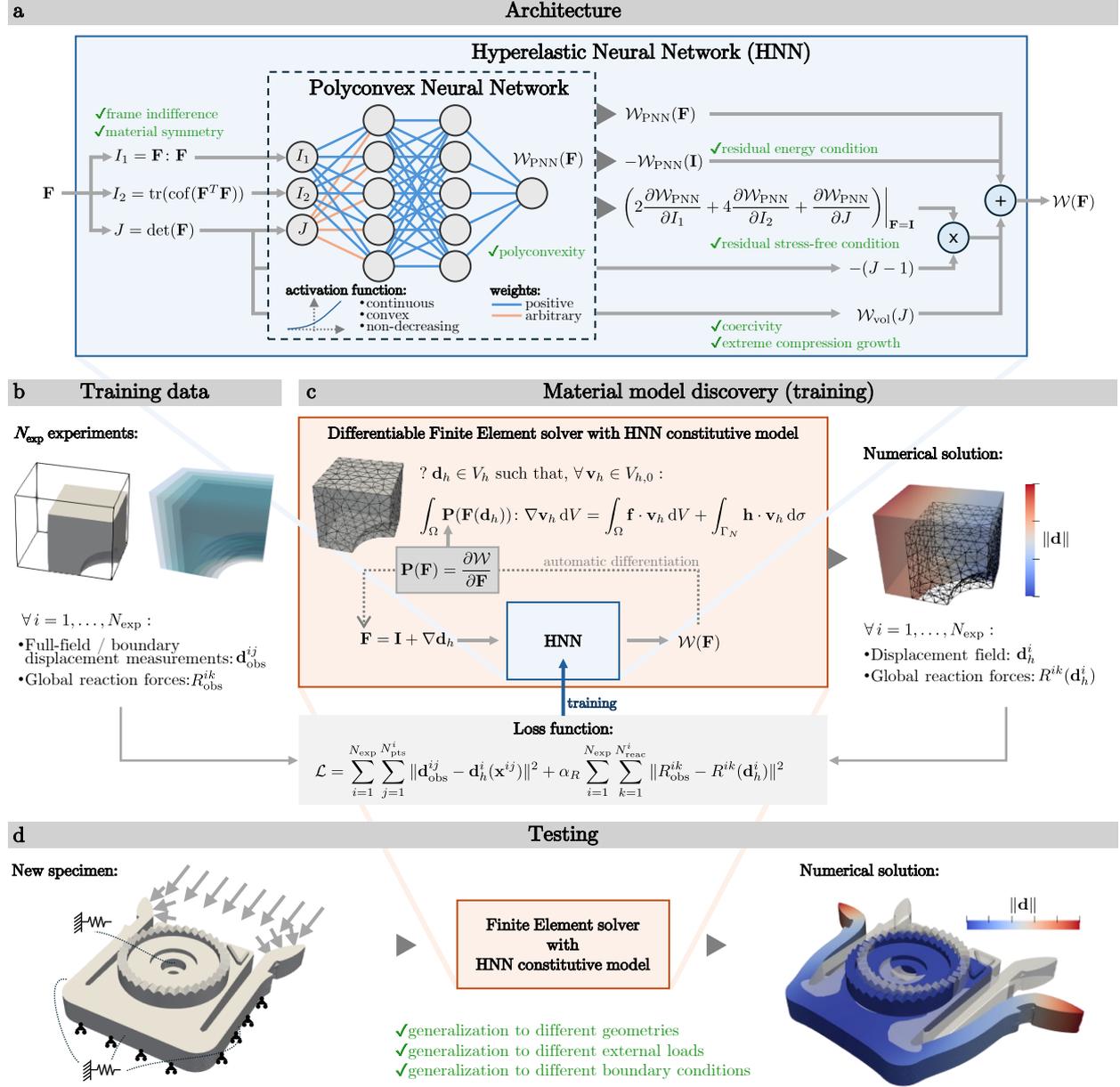}
    \caption{
        \textbf{Overview of the Neural-DFEM framework.} 
        (a) HNNs constitute a family of neural networks designed to approximate the strain energy density functional of a hyperelastic isotropic material. By construction, an HNN satisfies seven physical and mathematical requirements that ensure {existence of equilibria} and physical consistency, as indicated in the figure alongside the architectural components that enforce them.
        The figure highlights the main design traits of HNNs, including weights with controlled sign, and a specific choice of the activation function. Optionally, HNNs can include skip connections, although they are not represented in the figure.
        (b) The training data consist of (possibly partial) displacement measurements and global reaction forces, coming from $\numOBS$ experiments.
        (c) Training is performed by embedding a differentiable finite element solver within the loss evaluation. The loss measures the discrepancy between predicted and observed data, while the Piola-Kirchhoff stress tensor is obtained by differentiating the HNN output with respect to the deformation gradient.
        (d) Once trained, the HNN can be integrated into standard finite element solvers to predict the deformation of new specimens, enabling generalization across geometries, loading conditions, and boundary conditions.
        }
    \label{fig:graphical_abstract}
\end{figure}

\section{Methods}
\label{sec:methods}

\subsection{Notation} 
\label{sec:notation}

We denote with capital bold letters matrices, e.g., $\mathbf{A}, \mathbf{B} \in \mathbb{R}^{n \times n}$, and by lowercase bold letters vectors, e.g., $\mathbf{a}, \mathbf{b} \in \mathbb{R}^{n}$. 
The Frobenius norm is denoted by $|\cdot|$, that is $| \mathbf{A} | = \trace(\mathbf{A}^T \mathbf{A})^{1/2} = (\sum_{i,j} A_{ij}^2)^{1/2}$.
With $\mathbf{a} \succeq b$ (respectively, $\mathbf{a} \succ b$), we mean that each component of $\mathbf{a}$ is greater or equal to (respectively, greater than) the corresponding component of $\mathbf{b}$.
We denote by $\ones{n} \in \mathbb{R}^n$ the $n$-dimensional vector with all entries equal to 1, and by $\ones{n \times m} \in \mathbb{R}^{n\times m}$ the $n \times m$ matrix with all entries equal to 1.

We denote by $\Dim = 3$ the space dimension.
$\Id \in \mathbb{R}^{\Dim \times \Dim}$ is the identity tensor, $\Zr \in \mathbb{R}^{\Dim \times \Dim}$ is the zero tensor.
The set of second order tensors with positive determinant is denoted as 
$\GLplus = \{ \mathbf{A} \in \mathbb{R}^{\Dim \times \Dim} \mid \det \mathbf{A} > 0 \}$.
The special orthogonal group is denoted as
$\SO = \{ \mathbf{A} \in \mathbb{R}^{\Dim \times \Dim} \mid \mathbf{A}^T\mathbf{A} = \Id, \det \mathbf{A} = 1 \}$.


\subsection{Material model discovery: problem setting}
\label{sec:problem_setting}

Let us consider a bounded domain $\Omega \subset \mathbb{R}^{3}$, representing the stress-free reference configuration of an elastic body, and $\deformationMap\colon\Omega \to \mathbb{R}^{3}$ the deformation map such that $\deformationMap(\Omega)$ is the deformed configuration. 
We denote by $\displ(\x) = \deformationMap(\x) - \x$ the displacement field, by $\F = \nabla\deformationMap = \Id + \nabla\displ$ the deformation gradient tensor, and by $\J = \det\F$ its determinant.
We denote by $\C = \F^T\F$ the right Cauchy-Green tensor, and its invariants by $\invI = \trace\C$ and $\invII = \frac{1}{2}(\trace(\C)^2 - \trace(\C^2))$.
Moreover, we denote the isochoric invariants as $\isoinvI  = \J^{-2/3} \invI $ and $\isoinvII = \J^{-4/3} \invII$.

We consider a hyperelastic material, characterized by a strain energy density functional $\W \colon \GLplus \to \mathbb{R} \cup \{+\infty\}$, such that the Piola-Kirchhoff stress tensor is given by $\Piola(\F) = \frac{\partial\W}{\partial\F}$.
In such a way, the strain energy density $\W$ characterizes the material constitutive law. 
The knowledge of $\W$ allows to predict the material response to deformations. The goal of this work is to reverse this process, that is to discover the function $\W$ by observing the deformation of specimens of the considered materials.

The following differential problem models the configuration assumed by a body when subject to a distributed load $\loadBody$, to a boundary load $\loadTraction$ on $\BoundaryN \subset \partial\Omega$ and assigned displacement $\loadDispl$ on $\BoundaryD \subset \partial\Omega$ (where $\BoundaryN$ and $\BoundaryD$ form a disjoint partition of $\partial\Omega$):
\begin{equation} \label{eqn:equilibrium_strong_form}
    \left\{
    \begin{aligned}
        - \nabla \cdot \Piola(\F) &= \loadBody && \text{in } \Omega,\\
        \Piola(\F) \normal &= \loadTraction && \text{on } \BoundaryN,\\
        \displ &= \loadDispl && \text{on } \BoundaryD,\\
    \end{aligned}
    \right.
\end{equation}
where $\normal$ denotes the outward normal vector to $\BoundaryN$.
Notice that method proposed in this work extends to different kind of boundary conditions (such as follower loads, spring-like boundary conditions, sliding conditions, symmetric conditions), but, for the sake of brevity, we will carry on the presentation restricting ourselves to the case of \eqref{eqn:equilibrium_strong_form}.

We assume that $\numOBS$ experiments are conducted, each corresponding to a potentially different specimen geometry $\Omega^i$, boundary conditions type, and loads $\loadBody^i$, $\loadTraction^i$, $\loadDispl^i$. 
Let $\displ^i\colon\Omega_i \to \mathbb{R}^{3}$ denote the displacement field resulting from the $i$-th experiment, for $i = 1,\dots,\numOBS$.
For each experiment, we dispose of measurements of the displacement at a collection of $\numOBSpts{i}$ points. 
Specifically, we denote by $\displOBS{i}{j} \simeq \displ^i(\ptsOBS{i}{j})$ a (typically noisy) measurement of the displacement of the $i$-th experiment at the $j$-th point (denoted by $\ptsOBS{i}{j} \in \Omega^i$), for $j = 1,\dots,\numOBSpts{i}$.

Furthermore, typical experimental setups allow to measure the global reaction forces acting on the boundaries where prescribed displacements are assigned, that is in correspondence of Dirichlet boundary conditions. 
Suppose that the $i$-th experiment has $\numOBSreaction{i}$ Dirichlet boundary segments, denoted by $\BoundaryD^{ik} \subset \partial\Omega^i$, for $k=1,\dots,\numOBSreaction{i}$. 
Then, we denote by $\reaction{i}{k}$ the function that maps a displacement field into the associated normal reaction force{, where the subscript $\W$ stresses the dependence on the strain energy density function at hand}:
\begin{equation}
    \reaction{i}{k}(\displ) := \int_{\BoundaryD^{ik}} \Piola(\Id + \nabla \displ) \normal \cdot \normal \dS.
\end{equation}
Using this notation, we denote by $\reactionOBS{i}{k} \simeq \reaction{i}{k}(\displ^i)$ the (typically noisy) measured reaction force.

To summarize, the full set of training data is given by:
\begin{equation}
    \{ \displOBS{i}{j} \}_{i = 1,\dots,\numOBS}^{j = 1,\dots,\numOBSpts{i}}
    , \qquad
    \{ \reactionOBS{i}{k} \}_{i = 1,\dots,\numOBS}^{k = 1,\dots,\numOBSreaction{i}}.
\end{equation}

\subsubsection{Finite element approximation} \label{sec:FEM_formulation}

Considering problem~\eqref{eqn:equilibrium_strong_form}, let us introduce the space of kinematically admissible displacements $\spaceV$, and the space of admissible variations (test function space) $\spaceVZ$:
\begin{equation}
    \begin{split}
            \spaceV  &:= \{\displ \in [H^1(\Omega)]^3 \text{ s.t. } \displ = \loadDispl \text{ on } \BoundaryD\}, \\
            \spaceVZ &:= \{\displ \in [H^1(\Omega)]^3 \text{ s.t. } \displ = \mathbf{0} \text{ on } \BoundaryD\}.
    \end{split}
\end{equation}
The equilibrium residual $\bilinearResidual \colon \spaceV \times \spaceVZ \to \mathbb{R}${, where we stress the dependence on $\W$ through the subscript,} is given by:
\begin{equation}
    \bilinearResidual(\displ,\displTest) = \int_{\Omega} \Piola(\F) \colon \nabla \displTest \dV
    - \int_{\Omega} \loadBody \cdot \displTest \dV
    - \int_{\BoundaryN} \loadTraction \cdot \displTest \dS.
\end{equation}
The weak formulation of problem~\eqref{eqn:equilibrium_strong_form} reads as finding $\displ \in \spaceV$ such that $\bilinearResidual(\displ,\displTest) = 0$ for all $\displTest \in \spaceVZ$.

Let us consider a finite element space $\spaceVh \subset \spaceV$, defined on a triangulation of the domain $\Omega$, and let us define $\spaceVZh = \spaceVh \cap \spaceVZ$. 
Let $\{ \basisfun{n} \}_{n=1}^{\numDOF}$ be a basis for $\spaceVZh$.
Then, each function of $\spaceVh$ can be expressed as $\displ_h(\mathbf{x}) = \sum_{n=1}^{\numDOF} d_i \basisfun{n}(\mathbf{x}) + \loadDispl_h$, where $\loadDispl_h \in \spaceVh$ is the lifting of the boundary datum $\loadDispl$.
By using this notation, the finite element approximation of problem~\eqref{eqn:equilibrium_strong_form} consists of finding $\displ_h \in \spaceVh$ such that $\bilinearResidual(\displ_h,\displTest_h) = 0$ for all $\displTest_h \in \spaceVZh$, or, equivalently, such that $\bilinearResidual(\displ_h,\basisfun{n}) = 0$ for $n = 1, \dots, \numDOF$.
In what follows, we will employ the superscript $i$ to denote the variables associated with the $i$-th experiment (e.g., $\spaceVh^i$, $\displ_h^i$, $\numDOF^i$, $\basisfun{n}^i$).


\subsubsection{Material law discovery}

We introduce a hypothesis space $\spaceW \subset \{ \W \colon \GLplus \to \mathbb{R} \cup \{+\infty\} \}$, that is a set of candidate material laws.
The issue of designing a convenient hypothesis space, sufficiently rich but at the same time numerically tractable, will be addressed below.
For the moment, let us assume this space to be given.
Constitutive model discovery consists in finding the material law $\W \in \spaceW$ that is, in a suitable sense, most consistent with the observations $\displOBS{i}{j}$ and $\reactionOBS{i}{k}$. 
Such material discovery problem can be formulated in different manners.

A first possibility is to rely on the VFM formulation (such in \cite{huang2020learning,flaschel2021unsupervised}), which consists in looking for the material law such that the equilibrium residual is minimized:
\begin{equation}\label{eqn:discovery_VFM}
    \W^* = \argmin{\W \in \spaceW} \left(
      \sum_{i=1}^{\numOBS} \sum_{n=1}^{\numDOF^i}          \| \bilinearResidual(\displ_{h,\text{obs}}^i,\basisfun{n}^i) \|^2
    + \alpha_R           \sum_{i=1}^{\numOBS} \sum_{k=1}^{\numOBSreaction{i}} \| \reactionOBS{i}{k} - \reaction{i}{k}(\displ_{h,\text{obs}}^i) \|^2
    \right),
\end{equation}
where $\displ_{h,\text{obs}}^i \in \spaceVh^i$ is a suitably denoised projection of the observed displacement $\{ \displOBS{i}{j} \}_{j = 1,\dots,\numOBSpts{i}}$ onto the finite element space $\spaceVh^i$, and where $\alpha_R > 0$ is a weight constant.
Clearly, this approach is valid only when the observed displacement $\{ \displOBS{i}{j} \}_{j = 1,\dots,\numOBSpts{i}}$ covers the whole domain $\Omega^i$, otherwise the equilibrium residual cannot be computed.
{Additional regularization or sparsity-promoting terms can be added to \eqref{eqn:discovery_VFM} (see e.g. \cite{alheit2026cann}}).

A second alternative consists in changing the role of the equilibrium condition from a penalization term to a constraint, and minimizing the mismatch between the predicted and the observed displacement.
The resulting formulation reads:
\begin{equation}\label{eqn:discovery_DFEM}
    \W^* = \argmin{\W \in \spaceW} \left(
      \sum_{i=1}^{\numOBS} \sum_{j=1}^{\numOBSpts{i}}      \| \displOBS{i}{j}    - \displ_h^{i,{\W}}(\ptsOBS{i}{j}) \|^2
    + \alpha_R      \sum_{i=1}^{\numOBS} \sum_{k=1}^{\numOBSreaction{i}} \| \reactionOBS{i}{k} - \reaction{i}{k}(\displ_h^{i,{\W}}) \|^2
    \right),
\end{equation}
subject to the constraints $\bilinearResidual(\displ_h^{i,{\W}},\displTest_h^i) = 0$ for all $\displTest_h^i \in \spaceVZh^i$.
This is the approach followed in this work.
Specifically, the normalization constant is here defined as
\begin{equation}
    \alpha_R = \frac{
        \sum_{i=1}^{\numOBS} \sum_{j=1}^{\numOBSpts{i}}      \| \displOBS{i}{j}    \|^2
    }{
        \sum_{i=1}^{\numOBS} \sum_{k=1}^{\numOBSreaction{i}} \| \reactionOBS{i}{k} \|^2
    }
\end{equation}
so as to balance the relative contributions of the displacement and reaction terms in the objective function.

A third alternative is to treat both the equilibrium condition and the displacement observations as penalization terms.
In this case, both the constitutive law $\W$ and the displacement fields $\displ^i$ are variables of the optimization problem.
This formulation, when neural networks are used as ansatz for the displacement fields and the equilibrium condition is expressed in strong form, takes the form of a Physics-Informed Neural Network (PINN) \cite{raissi2019physics}.
As preliminary investigations with this technique highlighted very little stability of the training process and serious difficulties in convergence, we decided not to proceed further, and focus on the second approach only.


\subsection{Designing the hypothesis space}

In this section we address the issue of designing a hypothesis space $\spaceW$ of candidate constitutive laws that is sufficiently rich to encompass a wide range of possible materials, while being computationally tractable.

\subsubsection{Constitutive modeling requirements} 
\label{sec:W_requirements}

To define a suitable hypothesis space $\spaceW$, let us first review the requirements that the strain energy density functional $\W \colon \GLplus \to \mathbb{R} \cup \{+\infty\}$ must satisfy for physical meaningfulness and {for existence of solutions to the equation} \eqref{eqn:equilibrium_strong_form} \cite{truesdell2004non,antman2005nonlinear,gurtin2010mechanics}.

\begin{enumerate}[label=R\arabic*]
    \item \textbf{Residual energy condition:} \label{req:residual-energy}
    $\W(\Id) = 0$.
    
    This condition fixes the zero of the energy and ensures uniqueness of the energy reference. 
    Although the energy always appears under differentiation in the stress, having a reference value is convenient for consistency.

    \item \textbf{Residual stress free condition:} \label{req:residual-stress-free}
    $\Piola(\Id) = \Zr$.

    This ensures that, in the rest configuration ($\displ = \mathbf{0}$), no residual stress is present.    

    \item \textbf{Frame indifference:} \label{req:frame-indifference}
    $\W(\R\F) = \W(\F) \quad \forall \F \in \GLplus, \R \in \SO$.

    This expresses the requirement, also known as objectivity, that the material response is independent of rigid-body rotations of the current configuration. 
    Physically, the internal energy of the material does not change if the body is rotated as a whole. 

    \item \textbf{Material symmetry:} \label{req:symmetry}
    $\W(\F\R) = \W(\F) \quad \forall \F \in \GLplus, \R \in \SymmetryGroup$,

    where $\mathcal G \subseteq \SO$ is the material symmetry group, i.e., the set of all rotations that leave the material response invariant. 
    For isotropic materials, $\mathcal G = \SO$.

    \item \textbf{Polyconvexity:} \label{req:polyconvexity}
    $\W(\F) = \Wpoly(\F, \cof \F, \det \F)$ for some convex
    $\Wpoly \colon \mathbb{R}^{3 \times 3} \times  \mathbb{R}^{3 \times 3} \times \mathbb{R} \to \mathbb{R} \cup \{+\infty\}$.

    This condition, together with suitable growth and coercivity conditions (see next points), ensures existence of weak solutions to the equilibrium equation \eqref{eqn:equilibrium_strong_form}, as proved by John Ball \cite{ball1976convexity}. 
    Notice that polyconvexity can be relaxed to the weaker notion of quasiconvexity (a necessary condition for the existence of solutions) \cite{ball1976convexity}, but the latter is untractable and difficult to verify in practice, as it is a global property which cannot be reformulated as a local one \cite{kristensen1999non}.

    \item \textbf{Extreme compression growth:} \label{req:growth}
    $\W(\F) \to +\infty$ as $\J \to 0^+$.
    
    This condition ensures physically realistic behavior under extreme compression and prevents interpenetration of matter, reflecting that infinite energy would be required to collapse a volume element to zero.

    \item \textbf{Coercivity:} \label{req:coercivity}
    $\W(\F) \geq c_0(|\F|^2 + |\cof\F|^{3/2}) + c_1
    \quad \forall\,\F\in\GLplus \quad$ for some $c_0 > 0, c_1 \in \mathbb{R}$.

    Similarly to the previous condition, this ensures that the extreme strains should be maintained by infinite stresses.
    From the mathematical viewpoint, this condition is key for the application of the direct method of the calculus of variations to the proof of existence, as it guarantees that minimizing sequences remain bounded and that a weakly convergent subsequence exists, allowing the attainment of a minimizer.
    The coercivity condition considered in this work is a weaker version of the original one used by \cite{ball1976convexity}, and is due to M{\"u}ller \cite{muller1994new}.

\end{enumerate}

The frame indifference requirement \ref{req:frame-indifference} is equivalent to asking that $\W$ can be expressed as a function of the right Cauchy-Green tensor $\C = \F^T\F$.
Furthermore, for isotropic material ($\mathcal G = \SO$ in \ref{req:symmetry}), $\W$ can be expressed as a function of the invariants $\invI = \trace\C$, $\invII = \frac{1}{2}(\trace(\C)^2 - \trace(\C^2))$ and $\J$, that is there exists a function 
$\Wisoinv\colon \RplusZ \times \RplusZ \times \RplusZ \to \mathbb{R} \cup \{+\infty\}$ such that $\W(\F) = \Winv(\invI,\invII,\J)$.

\subsubsection{Structure preserving general form}

Several neural network architectures have been proposed in the literature to satisfy subsets of the requirements outlined above; however, to the best of our knowledge, no practically usable architecture has yet been shown to satisfy {the complete set \ref{req:residual-energy}--\ref{req:coercivity}, including the M{\"u}ller-type coercivity condition \ref{req:coercivity}, simultaneously by construction}.
For instance, the approach in \cite{fernandez2021anisotropic} guarantees frame indifference and material symmetry, but polyconvexity may be violated. 
In \cite{vlassis2020geometric}, some of the constraints are enforced only weakly through penalty terms in the loss function and are therefore not guaranteed throughout the training process. 
The work of \cite{klein2022polyconvex} represents the first neural-network-based constitutive model in which polyconvexity is enforced by design; however, this is achieved at the expense of exactly satisfying either frame indifference or the residual-stress-free condition.
In \cite{tac2022data}, Neural ODEs are employed to guarantee convexity with respect to individual invariants, but under the assumption of an additive decomposition of the energy into terms depending on a single invariant, which precludes the representation of energies involving interactions among multiple invariants. 
In \cite{thakolkaran2022nn}, polyconvexity and the residual-stress-free condition are jointly enforced; nevertheless, coercivity is not guaranteed by construction. 
Moreover, a convex and monotonically increasing dependence on the Jacobian~$\J$ is imposed, whereas convexity alone would suffice, thereby unnecessarily restricting the admissible hypothesis space.
{Furthermore, existence of solutions to the equilibrium problem generally requires a suitable coercivity condition, such as the M{\"u}ller-type condition \ref{req:coercivity} \cite{muller1994new}. 
Previous works, including \cite{linden2023neural}, which is, to the best of our knowledge, the closest existing architecture to the one proposed here, have considered weaker volumetric growth conditions, requiring the strain-energy density to diverge as $\J\to0^+$ and $\J\to+\infty$.
While such conditions provide an important control on extreme volumetric deformations, they do not in general yield coercive control with respect to $\F$ and $\cof\F$, as required in classical existence results for nonlinear elasticity \cite{ball1976convexity}.}

To define a general strain energy formulation that fully preserves the structure imposed by the requirements listed in the previous section, we observe that, when the strain energy density is expressed as a function of the invariants, the Piola--Kirchhoff stress tensor admits the following representation:

\begin{equation}
    \begin{split}
    \Piola(\F) &= 
    \frac{\partial \W(\F)}{\partial \F} =
    \frac{\partial \Winv}{\partial \invI } \frac{\partial \invI}{\partial \F} +
    \frac{\partial \Winv}{\partial \invII} \frac{\partial \invII}{\partial \F} +
    \frac{\partial \Winv}{\partial \J    } \frac{\partial \J}{\partial \F}
    \\
    &=
    \frac{\partial \Winv}{\partial \invI } 2 \F +
    \frac{\partial \Winv}{\partial \invII} 2 (\invI\F - \F\F^T\F) + 
    \frac{\partial \Winv}{\partial \J    } \cof \F
    \end{split}
\end{equation}
It follows that, in the reference configuration $\F=\Id$:
\begin{equation} \label{eqn:piola_reference}
    \Piola(\Id) = \left.\left(
    2 \frac{\partial \Winv}{\partial \invI } +
    4 \frac{\partial \Winv}{\partial \invII} + 
    \frac{\partial \Winv}{\partial \J    }
    \right)\right|_{\F = \Id} \Id
\end{equation}
Hence a practical way of guaranteeing the residual stress free condition \ref{req:residual-stress-free} is to let the term inside the parenthesis in \eqref{eqn:piola_reference} vanish.

Based on the above observations, we express the candidate strain energy density functionals in the general form of the next theorem.

\begin{theorem} \label{prop:W_full}
    Consider the strain energy density functional:
    \begin{equation} \label{eqn:W_full}
        \Winv(\invI,\invII,\J) = \Wbase(\invI,\invII,\J) + \Wgrow(\J) + \stressfactor(\J-1) - \W_0, 
    \end{equation}
    where:
    \begin{itemize}
        \item $\Wbase$ is a polyconvex energy density;

        \item     
        $\Wgrow\colon\RplusZ\to \RplusZ$ is a convex, $\Cont{1}$ function such that $\Wgrow(1) = \Wgrow'(1) = 0$.

        \item $\stressfactor \in \mathbb{R}$ is a constant defined as:
        $
            \stressfactor = -\left.\left(
            2 \frac{\partial \Wbase}{\partial \invI } +
            4 \frac{\partial \Wbase}{\partial \invII} + 
            \frac{\partial \Wbase}{\partial \J    }
            \right)\right|_{\F = \Id}
        $;

        \item $\W_0 \in \mathbb{R}$ is a constant defined as: $\W_0 = \left.\Wbase\right|_{\F = \Id}$.

    \end{itemize}
    Then, the material model \eqref{eqn:W_full} satisfies the requirements \ref{req:residual-energy}--\ref{req:polyconvexity}.
\end{theorem}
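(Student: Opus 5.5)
The proof verifies each requirement in turn, using only the structure of \eqref{eqn:W_full} and the stress representation \eqref{eqn:piola_reference}.

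\textbf{R1 and R2.} At $\F=\Id$ we have $\invI=\invII=3$ and $\J=1$. Hence
\[
\Winv(3,3,1)=\Wbase|_{\F=\Id}+\Wgrow(1)+\stressfactor\cdot 0-\W_0=0,
\]
since $\Wgrow(1)=0$ and $\W_0=\Wbase|_{\F=\Id}$; this gives \ref{req:residual-energy}. For \ref{req:residual-stress-free}, apply \eqref{eqn:piola_reference} to the full energy. The terms $\Wgrow(\J)$ and $\stressfactor(\J-1)$ depend only on $\J$, so they contribute only to $\partial\Winv/\partial\J$, namely $\Wgrow'(1)+\stressfactor=\stressfactor$ at $\J=1$. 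The bracket in \eqref{eqn:piola_reference} therefore equals
\[
\left.\left(2\frac{\partial\Wbase}{\partial\invI}+4\frac{\partial\Wbase}{\partial\invII}+\frac{\partial\Wbase}{\partial\J}\right)\right|_{\F=\Id}+\stressfactor=0
\]
by the definition of $\stressfactor$, so $\Piola(\Id)=\Zr$. Here I assume $\Wbase$ is differentiable near $\Id$, which is implicit in the definition of $\stressfactor$.

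\textbf{R3 and R4.} Every term depends on $\F$ only through $\invI$, $\invII$ and $\J$. For $\R\in\SO$, the tensor $\C$ is unchanged under $\F\mapsto\R\F$, so $\invI$ and $\invII$ are unchanged, and $\det(\R\F)=\det\F$. Under $\F\mapsto\F\R$ we get $\C\mapsto\R^T\C\R$, which is a similarity transformation, so the invariants are again unchanged, and $\det(\F\R)=\det\F$. This gives frame indifference and isotropy, i.e.\ \ref{req:symmetry} with $\SymmetryGroup=\SO$. I will read "polyconvex energy density" $\Wbase$ as a function of $(\invI,\invII,\J)$, as the notation indicates, so that this argument applies to it as well.

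\textbf{R5.} The key point is that polyconvexity is preserved under sums. Write $\Wbase(\F)=g_b(\F,\cof\F,\det\F)$ with $g_b$ convex. Then take
\[
\Wpoly(\mathbf{A},\mathbf{B},\delta)=g_b(\mathbf{A},\mathbf{B},\delta)+\Wgrow(\delta)+\stressfactor(\delta-1)-\W_0 .
\]
Here $\delta\mapsto\Wgrow(\delta)$ is convex by hypothesis, extended by $+\infty$ for $\delta<0$; the extension stays convex because $\RplusZ$ is a convex set. The term $\stressfactor(\delta-1)$ is affine and $\W_0$ is constant. Each term is convex in the joint variable $(\mathbf{A},\mathbf{B},\delta)$, since a function of one coordinate that is convex in that coordinate is jointly convex. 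The sum is therefore convex, and on $\GLplus$ it agrees with $\Winv$.

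\textbf{Main obstacle.} None of the steps is difficult. The one needing care is R2: one must check that the added $\J$-dependent terms enter \eqref{eqn:piola_reference} only through $\partial/\partial\J$, with coefficient $\cof\Id=\Id$. One must also correctly handle the sign of the affine term. Its sign is irrelevant for convexity, since an affine term is convex with either sign, but it is exactly what cancels the residual stress.
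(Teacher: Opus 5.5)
Your proof is correct and follows the same route as the paper. The paper simply declares \ref{req:residual-energy}--\ref{req:symmetry} easy to verify and obtains \ref{req:polyconvexity} from the facts that a sum of polyconvex functions is polyconvex and a convex function of $\J$ is trivially polyconvex. You supply exactly those details: $\Wgrow(1)=0$ and $\W_0=\Wbase|_{\F=\Id}$ for \ref{req:residual-energy}, the cancellation of $\Wgrow'(1)+\stressfactor$ against the bracket in \eqref{eqn:piola_reference} for \ref{req:residual-stress-free}, invariance of $\invI,\invII,\J$ for \ref{req:frame-indifference}--\ref{req:symmetry}, and an explicit convex $\Wpoly$ with $\Wgrow$ extended by $+\infty$ for negative argument for \ref{req:polyconvexity}.
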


\begin{proof}
    Requirements \ref{req:residual-energy}--\ref{req:residual-stress-free}--\ref{req:frame-indifference}--\ref{req:symmetry} are easy to verify.
    Polyconvexity follows from the fact that the sum of polyconvex functions is polyconvex, and that a convex function of $\J$ is trivially polyconvex.
\end{proof}

\cref{prop:W_full} provides a systematic mechanism to enforce most of the requirements listed in Sec.~\ref{sec:W_requirements}, while retaining substantial flexibility in the choice of~$\Wbase$. In particular, the construction reduces the design of the constitutive model to ensuring polyconvexity of~$\Wbase$ and the satisfaction of the associated growth and coercivity conditions for the resulting strain energy density functional.

\begin{remark} \label{rem:why_stress_factor_on_J}
    To ensure the residual stress free requirement \ref{req:residual-stress-free}, the term $\stressfactor(\J-1)$ could be equivalently replaced either by $\frac{1}{2}\stressfactor(\invI-3)$ or by $\frac{1}{4}\stressfactor(\invII-3)$, or by linear combinations of these three terms.
    However, this would render more tricky to ensure polyconvexity.    
    Indeed, polyconvexity requires that the strain energy be a convex function of $\F$, $\cof \F$, and $\det \F$. 
    While $\J$ is directly one of the polyconvex arguments and thus adding a term proportional to $(\J-1)$ preserves polyconvexity regardless of the sign of the coefficient, the invariants $\invI$ and $\invII$ are convex functions of $\mathbf{F}$ and $\cof \mathbf{F}$, respectively. 
    Hence, polyconvexity is preserved provided that $\invI$ and $\invII$ are composed with a convex non-decreasing function (see \ifappendix\cref{prop:convex_nondecreasing_composition}\else Supplementary Information\fi). 
    However, ensuring that the corresponding coefficient $\omega$ is non-negative is not straightforward if $\Wbase$ is allowed to take general expressions.
\end{remark}

\subsubsection{Polyconvex Neural Networks}

We present here a neural network architecture that satisfies polyconvexity by design.
Similarly to other architectures proposed in the literature \cite{klein2022polyconvex,linden2023neural}, the one considered in this work is inspired by Input-Convex Neural Networks (ICNNs), introduced in \cite{amos2017input}, which are built upon two fundamental principles: (i) non-negative weighted sums of convex functions are themselves convex, and (ii) the composition of a convex function with a convex, non-decreasing function preserves convexity.

\begin{definition} \label{def:polyconvexNN}
    A Polyconvex Neural Network (PNN) with $\numLay \geq 1$ hidden neuron layers and $\numNeur{1},\dots,\numNeur{\numLay}$ neurons per layer is a function $\Wpnn\colon \RplusZ \times \RplusZ \times \RplusZ \to \mathbb{R}$, mapping $(\invI, \invII, \J)\mapsto \neurOut$ as recursively defined below:
    \begin{equation} \label{eqn:Wnn}
        \left\{
        \begin{aligned}
            \neur_{1} & = \act(   \weightI{1}(\invI - 3) 
                                + \weightII{1}(\invII - 3)
                                + \weightJ{1}(\J - 1) 
                                + \bias{1}
                                ), &\\
            \neur_{i} & = \act(   \weightI{i}(\invI - 3) 
                                + \weightII{i}(\invII - 3) 
                                + \weightJ{i}(\J - 1) 
                                + \weightZ{i-1} \neur_{i-1}
                                + \bias{i} 
                                ), &\quad\text{for } i = 2, \ldots, \numLay\\
            \neurOut & = \Wconst \, \weightOUT \cdot \neur_{\numLay}.&
        \end{aligned}
        \right.
    \end{equation}
    Here, $\neur_{i} \in \mathbb{R}^{\numNeur{i}}$ denotes the vector of neuron activations of the $i$-th layer, for $i = 1, \ldots, \numLay$.    
    The activation function $\act\colon \mathbb{R} \to \mathbb{R}$ is a convex, non-decreasing, non-constant and $\Cont{1}$ function, while $\Wconst > 0$ is a fixed scaling constant.
    The trainable parameters of the network are
    $\weightI{i}, \weightII{i}, \weightJ{i}, \bias{i} \in \mathbb{R}^{\numNeur{i}}$, for $i = 1, \ldots, \numLay$;
    $\weightZ{i} \in \mathbb{R}^{\numNeur{i+1} \times \numNeur{i}}$, for $i = 1, \ldots, \numLay-1$;
    $\weightOUT \in \mathbb{R}^{\numNeur{\numLay}}$.
    By definition, the parameters must satisfy the positivity constraints:
    $\weightI{i} \succ 0$, 
    $\weightII{i} \succ 0$, 
    $\weightZ{i} \succ 0$, 
    $\weightOUT \succ 0$ for any $i$.
\end{definition}

Following the design of ICNNs \cite{amos2017input}, two architectural choices are made:
first, non-negativity is required for most of the weights;
second, the activation function $\act$ is chosen to be a convex and non-decreasing.
As we prove below (see \cref{prop:polyconvexity}), these two choices combined together ensure that $\Wpnn$ is polyconvex.
We notice that the weight that directly multiplies $\J$ is not required to be nonnegative.
In fact, as observed in \cref{rem:why_stress_factor_on_J}, while $\invI$ and $\invII$ are convex functions of the triplet $(\F, \cof\F, \J)$, and hence can be multiplied only by nonnegative weights to preserve polyconvexity, $\J$ is part of the triplet itself, and so can safely be multiplied by negative weights.

We remark that, for the purpose of ensuring polyconvexity, non-negativity of the above mentioned weights would be sufficient.
However, to guarantee the coercivity requirement too, we require strict positivity, as it will be clarified later.

The constant $\Wconst > 0$ is a dimensional hyperparameter necessary to recover the physical dimension of an energy density (e.g. $\si{\pascal}$). 
Its value could be set based on a prior estimate of the material overall stiffness, to provide the model with the suitable order of magnitude.

We notice that, for the sake of generality, we have included skip connections in \eqref{eqn:Wnn}, allowing each layer to directly depend on the invariants $\invI$, $\invII$ and $\J$, through the weights $\weightI{i}$, $\weightII{i}$ and $\weightJ{i}$ for $i \geq 2$.
However, these terms can be removed without compromising the validity of the theoretical results stated below.

\begin{theorem} \label{prop:polyconvexity}
    Let us consider the strain energy density functional $\Wpnn$ of \cref{def:polyconvexNN}.
    Then, for any choice of the neural network hyperparameters and parameters satisfying the positivity constraints listed in the definition, $\Wpnn$ is polyconvex.
\end{theorem}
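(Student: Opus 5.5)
The plan is to produce an explicit convex function $\Wpoly$ of the triplet $(\F,\cof\F,\J)$ such that $\Wpnn(\F)=\Wpoly(\F,\cof\F,\det\F)$, and then prove convexity of $\Wpoly$ by induction over the layers.

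The first step is to handle the inputs. Note that $|\F|^2=\trace(\F^T\F)=\invI$. In three dimensions $\invII=|\cof\F|^2$: the eigenvalues of $\C$ are $\lambda_i^2$, $\invII$ is the second elementary symmetric function $\sum_{i<j}\lambda_i^2\lambda_j^2$, and $\cof\F^T\cof\F=\cof\C$ has trace equal to that same sum. So we define on $\mathbb{R}^{3\times3}\times\mathbb{R}^{3\times3}\times\mathbb{R}$ the functions
\[
\phi_1(\mathbf{A},\mathbf{B},\delta)=|\mathbf{A}|^2-3,\qquad
\phi_2(\mathbf{A},\mathbf{B},\delta)=|\mathbf{B}|^2-3,\qquad
\phi_3(\mathbf{A},\mathbf{B},\delta)=\delta-1 .
\]
Here $\phi_1$ and $\phi_2$ are convex (squared norms) and $\phi_3$ is affine. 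On the triplet they reproduce $\invI-3$, $\invII-3$ and $\J-1$.

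The second step is the induction. Define $\neur_1(\mathbf{A},\mathbf{B},\delta)=\act(\weightI{1}\phi_1+\weightII{1}\phi_2+\weightJ{1}\phi_3+\bias{1})$ componentwise, and define later layers analogously. The claim is that every component of $\neur_i$ is a convex function of $(\mathbf{A},\mathbf{B},\delta)$.

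For the base case, each component of the pre-activation is a sum of three pieces: a nonnegative multiple of the convex $\phi_1$, a nonnegative multiple of the convex $\phi_2$, and an arbitrary real multiple of the affine $\phi_3$ plus a constant. Such a sum is convex. This is exactly where the sign freedom on $\weightJ{i}$ is harmless, because an affine function stays affine under any scalar multiple. Composing with $\act$, which is convex and non-decreasing, preserves convexity; this is the standard fact invoked in \cref{rem:why_stress_factor_on_J}.

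For the inductive step from $i-1$ to $i$, the extra term $\weightZ{i-1}\neur_{i-1}$ is, componentwise, a nonnegative combination of convex functions, because $\weightZ{i-1}\succ0$. So the pre-activation is again convex, and applying $\act$ keeps it convex.

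Finally, $\neurOut=\Wconst\,\weightOUT\cdot\neur_\numLay$ is a nonnegative combination of convex functions, since $\Wconst>0$ and $\weightOUT\succ0$. It is therefore a convex $\Wpoly$, real-valued everywhere, with $\Wpnn(\invI,\invII,\J)=\Wpoly(\F,\cof\F,\det\F)$, which is the definition of polyconvexity in \ref{req:polyconvexity}. The skip connections only add terms of the base-case type, so removing them does not affect the argument.

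The composition lemma itself is routine. If $f$ is convex and $\act$ is convex and non-decreasing, then
\[
\act(f(t x+(1-t)y))\le\act(t f(x)+(1-t)f(y))\le t\act(f(x))+(1-t)\act(f(y)),
\]
where the first inequality uses monotonicity of $\act$ and the second uses its convexity.

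The main obstacle, to the extent there is one, is the identification $\invII=|\cof\F|^2$ rather than any convexity estimate. I would verify it cleanly through the singular value decomposition $\F=\mathbf{U}\,\mathrm{diag}(\lambda_i)\mathbf{V}^T$, using $\cof\F=\J\F^{-T}$ to get $\cof\F=\mathbf{U}\,\mathrm{diag}(\lambda_2\lambda_3,\lambda_1\lambda_3,\lambda_1\lambda_2)\mathbf{V}^T$. Its Frobenius norm squared is then $\sum_{i<j}\lambda_i^2\lambda_j^2=\invII$.
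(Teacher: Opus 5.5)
Your proposal is correct and follows essentially the same route as the paper. Both arguments identify $\invI=|\F|^2$ and $\invII=|\cof\F|^2$ (the paper via $\cof\C=(\cof\F)^T\cof\F$, you additionally via the SVD) and then propagate convexity layer by layer, using the nonnegative weights and the convex non-decreasing activation. Your explicit construction of a convex $\Wpoly$ on $(\mathbf{A},\mathbf{B},\delta)$ simply makes precise what the paper means by ``polyconvex composed with convex non-decreasing is polyconvex.''
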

\begin{proof}
    First, we observe that
    \begin{equation} \label{eqn:invariants_and_frobenius}
        \begin{split}
        \invI &  = \trace\C = \trace(\F^T\F) = |\F|^2, \\ 
        \invII & = \trace(\cof\C) = \trace((\cof\F)^T\cof\F) = |\cof\F|^2 ,
        \end{split}
    \end{equation}
    where we have exploited the identity $\cof\C = J^2 \C^{-T} = J^2 \F^{-1} \F^{-T} = (\cof\F)^T\cof\F$.
    This shows that $\invI$ and $\invII$ are convex functions of $\F$ and $\cof\F$, respectively. 
    
    Each entry of the argument of $\act$ in the first layer is polyconvex, being the sum of polyconvex functions.
    Hence, each neuron of the first layer in the network is also polyconvex, being the composition between a polyconvex function with a convex non-decreasing function.
    By similar arguments, also neurons in successive layers are polyconvex. 
    Finally, $\Wpnn$ is polyconvex being the positive-weighted sum of polyconvex functions.
\end{proof}

\subsubsection{Hyperelastic Neural Networks} \label{sec:HNN}

We are ready to introduce the hypothesis space $\spaceW$ used within this work.

\begin{definition}
    A Hyperelastic Neural Network (HNN) is a function $\Whnn \colon \RplusZ \times \RplusZ \times \RplusZ \to \mathbb{R}$, defined as in \eqref{eqn:W_full}, by setting $\Wbase = \Wpnn$, with $\Wpnn$ being a PNN, and where $\stressfactor$ and $\W_0$ are defined as in \cref{prop:W_full}.
\end{definition}

The following results show that, for an HNN, all the requirements \ref{req:residual-energy}--\ref{req:coercivity} are satisfied by design, provided that the volumetric term $\Wgrow$ satisfies two additional hypothesis (growth at $\J \to 0^+$, and superlinear growth at $\J \to +\infty$).

\begin{theorem} \label{prop:W_full_nn}
    Let us consider a HNN, where $\Wgrow$ satisfies the hypotheses of \cref{prop:W_full} with the additional hypotheses:
    \begin{equation} \label{eqn:Wgrowth_limits}
            \lim_{\J \to 0^+} \Wgrow(\J) = +\infty, \qquad
            \lim_{\J \to +\infty} \frac{\Wgrow(\J)}{\J} = +\infty.
    \end{equation}
    Then, for any admissible choice of the neural network hyperparameters and parameters, the strain energy density functional satisfies all the requirements \ref{req:residual-energy}--\ref{req:coercivity}.
\end{theorem}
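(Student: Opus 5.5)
By \cref{prop:W_full} and \cref{prop:polyconvexity}, requirements \ref{req:residual-energy}--\ref{req:polyconvexity} already hold, since $\Wpnn$ is polyconvex and $\Wgrow$ satisfies the hypotheses of \cref{prop:W_full}. What remains is \ref{req:growth} and \ref{req:coercivity}. The plan is to bound $\Wpnn$ from below by an affine function of $(\invI,\invII,\J)$ with strictly positive coefficients on $\invI$ and $\invII$. The volumetric term will then absorb every linear term in $\J$ and provide the blow-up at $\J\to0^+$.

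\emph{Step 1: affine lower bound for the PNN.} Since $\act$ is convex, non-decreasing and non-constant, there is a point $t_0$ with $\act'(t_0)=:a>0$. Convexity gives $\act(t)\ge \act(t_0)+a(t-t_0)$ for all $t$. Apply this neuron-wise in the first layer, using $\weightI{1}\succ0$ and $\weightII{1}\succ0$:
\[
\neur_1 \succeq \mathbf{p}_1(\invI-3)+\mathbf{q}_1(\invII-3)+\mathbf{r}_1(\J-1)+\mathbf{s}_1,
\]
with $\mathbf{p}_1=a\weightI{1}\succ0$, $\mathbf{q}_1=a\weightII{1}\succ0$, and $\mathbf{r}_1,\mathbf{s}_1$ of arbitrary sign.

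Proceed by induction over the layers. Because $\weightZ{i-1}\succ0$ and $\act$ is non-decreasing, the affine lower bound on $\neur_{i-1}$ can be inserted into the argument of layer $i$. Applying the tangent-line bound again then gives an affine lower bound on $\neur_i$ whose $\invI$ and $\invII$ coefficients are still strictly positive: they are $a(\weightI{i}+\weightZ{i-1}\mathbf{p}_{i-1})$, and similarly for $\invII$. Strict positivity survives even without skip connections, since $\weightZ{i-1}\mathbf{p}_{i-1}\succ0$. Finally, $\weightOUT\succ0$ and $\Wconst>0$ yield
\[
\Wpnn\ge \alpha\,\invI+\beta\,\invII+\gamma\,\J+\delta,\qquad \alpha,\beta>0,\ \gamma,\delta\in\mathbb{R}.
\]
This step is where strict positivity of the weights, rather than mere non-negativity, is used.

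\emph{Step 2: coercivity.} By \eqref{eqn:invariants_and_frobenius}, $\invI=|\F|^2$ and $\invII=|\cof\F|^2$. Since $x^2\ge x^{3/2}-1$ for $x\ge0$, we get $\beta|\cof\F|^2\ge\beta|\cof\F|^{3/2}-\beta$. Hence
\[
\Whnn\ge c_0(|\F|^2+|\cof\F|^{3/2}) + \bigl[\Wgrow(\J)+(\gamma+\stressfactor)\J\bigr]+c,
\qquad c_0=\min(\alpha,\beta).
\]
The bracket is continuous on $(0,\infty)$ and tends to $+\infty$ as $\J\to0^+$, because the linear term stays bounded there. It also tends to $+\infty$ as $\J\to+\infty$, by the superlinear growth hypothesis \eqref{eqn:Wgrowth_limits}. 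It is therefore bounded below on $(0,\infty)$, which gives \ref{req:coercivity}.

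\emph{Step 3: extreme compression.} For \ref{req:growth}, the same inequality together with $|\F|^2\ge0$ gives $\Whnn\ge \Wgrow(\J)+(\gamma+\stressfactor)\J+c$, which diverges as $\J\to0^+$.

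The main obstacle is Step 1: the induction has to carry the sign bookkeeping correctly through arbitrary-sign biases and $\J$-weights, and it hinges on picking a tangent point where $\act'>0$. Such a point exists precisely because $\act$ is non-constant and non-decreasing.
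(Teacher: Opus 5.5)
Your proof is correct and follows the paper's argument: a tangent-line lower bound on $\act$ is propagated through the layers to give an affine bound with positive $\invI,\invII$ coefficients, and the linear $\J$ term is then absorbed into $\Wgrow$ via superlinear growth, continuity, and the limits at both ends. The differences are minor: you obtain \ref{req:growth} from the same affine bound rather than from monotonicity of $\Wpnn$ in $(\invI,\invII)$ (the paper uses $\Wpnn \geq \Wpnn(0,0,\J)$), and you make explicit the step $|\cof\F|^2 \geq |\cof\F|^{3/2}-1$ that the paper leaves implicit.
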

\begin{proof}    
    Requirements \ref{req:residual-energy}--\ref{req:polyconvexity} follow from \cref{prop:W_full} and \cref{prop:polyconvexity}.
    To prove requirement \ref{req:growth}, it suffices to observe that $\Wpnn$ is non-decreasing in both $\invI$ and $\invII$, being the composition of non-decreasing functions. 
    Therefore $\Wpnn(\invI,\invII,\J) \geq \Wpnn(0,0,\J)$, which has clearly a finite limit when $\J \to 0^+$.
    Hence, thanks to \eqref{eqn:Wgrowth_limits}, requirement \ref{req:growth} holds true.

    To prove coercivity (requirement \ref{req:coercivity}), we start by observing that, as $\act$ is $\Cont{1}$ convex and non-decreasing but non-constant, we have that $\act(y) \geq ay+b$ for some $a>0$ and $b\in\mathbb{R}$.
    It follows that the first layer of $\Wpnn$ satisfies:
    \begin{equation}
        \exists \, a > 0, b \in \mathbb{R}, c \in \mathbb{R} \text{ s.t. } \quad 
        \neur_{1} \succeq a (\invI + \invII) + b \J + c.
    \end{equation}
    By the same argument, all successive layers $\neur_{i}$ satisfy the same inequality.
    Therefore, we have:    
    \begin{equation}
        \exists \, a > 0, b \in \mathbb{R}, c \in \mathbb{R} \text{ s.t. } \quad 
        \Wpnn(\invI,\invII,\J) \geq a (\invI + \invII) + b \J + c.
    \end{equation}
    Then:
    \begin{equation}
        \exists \, a > 0, b \in \mathbb{R}, c \in \mathbb{R} \text{ s.t. } \quad 
        \Whnn(\invI,\invII,\J) \geq a (\invI + \invII) + b \J + \Wgrow(\J) + c.
    \end{equation}
    The function $\J \mapsto b \J + \Wgrow(\J)$ is continuous and tends to $+\infty$ both for $\J \to 0^+$ and $\J\to+\infty$ (thanks to the super-linear growth of $\Wgrow$), and hence it is bounded from below. It follows that
    \begin{equation}
        \exists \, a > 0, b \in \mathbb{R}, c \in \mathbb{R} \text{ s.t. } \quad 
        \Whnn(\invI,\invII,\J) \geq 
        a (\invI + \invII) + c 
        = a (|\F|^2 + |\cof\F|^2) + c,
    \end{equation}
    where we have exploited \eqref{eqn:invariants_and_frobenius}. 
    This entails the coercivity of the strain energy density functional.

\end{proof}

The term $\Wgrow(\J)$ determines the behavior of the energy for $\J \to 0^+$ and $\J \to +\infty$.
In principle, we could think of learning $\Wgrow(\J)$, by designing an architecture that satisfies all the associated hypotheses.
However, this would be poorly constrained by the training data, as both extreme regimes of volumetric compression and expansion are not observed in experiments.
Indeed, the role of $\Wgrow(\J)$ is mainly to ensure {existence of solutions to the equilibrium problem} and promote the finite element solvability along the whole optimization process.
Therefore, we prefer to choose a priori the shape of $\Wgrow(\J)$, and limit ourselves to learn a multiplicative parameter.
Specifically, we set $\Wgrow(\J) = \frac{1}{2} \weightGrowth (\J-1)\log\J$, where $\weightGrowth > 0$ is a trainable parameter.
Notice that this choice is very conservative, as it has a very mild growth both for $\J \to 0^+$ (logarithmic) and for $\J \to +\infty$ ($\sim\J\log\J$, considering that superlinear growth is required by \cref{prop:W_full_nn}).

\begin{remark} \label{rem:isochoric_invariant_input}
    An alternative to the architecture described in \cref{def:polyconvexNN} consists in replacing the invariants $\invI$ and $\invII$ with their isochoric counterparts. More precisely, we can replace $\invI$ by $\isoinvI$. 
    However, it is not convenient to use $\isoinvII$, since it is not polyconvex (see Lemma~2.4 in \cite{hartmann2003polyconvexity}), and therefore the proof of \cref{prop:polyconvexity} would no longer apply. As an alternative, one may use $\isoinvII^{3/2}$ as an input of the neural network, since this quantity is proven to be polyconvex (Corollary~2.3 in \cite{hartmann2003polyconvexity}). 
    With this choice, the result of \cref{prop:polyconvexity} remains valid.
    On the other hand, the corresponding HNN is no longer strictly coercive in the sense of requirement~\ref{req:coercivity}, but only coercive on sublevel sets of the form $(\J < J_{\text{max}})$, as can be readily shown by revisiting the proof of \cref{prop:W_full_nn}. 
    Nevertheless, the volumetric term $\Wgrow(\J)$ prevents excessive growth of $\J$ in practice, and therefore this formulation can still be safely employed in applications.
\end{remark}

\subsection{Initialization and parametrization of HNNs}

The initialization of trainable parameters plays a crucial role in the successful training of any neural network architecture.
Poor initialization may result in slow convergence or cause the optimizer to become trapped in undesirable regions of the parameter space~\cite{glorot2010understanding,lecun2002efficient}.
Whenever a neural architecture is introduced, a carefully designed initialization strategy is therefore essential.
Typically, initialization schemes aim to control the magnitude of activations and gradients, avoiding both explosion and collapse, while preventing saturation of the activation functions, typically by keeping pre-activations in a neighborhood of zero~\cite{glorot2010understanding,he2015delving}.

Moreover, in the HNN architecture several weights are constrained to assume positive values in order to preserve polyconvexity.
Since most training algorithms are designed for working in unconstrained parameter spaces, a reparametrization is required to convert the constrained optimization problem into an unconstrained one.
In this section, we describe the proposed strategy that jointly addresses the parametrization and initialization of the HNN trainable parameters.

First, we select an activation function satisfying $\act(0) = 0$. 
Specifically, throughout this work, we employ the shifted softplus function $\act(z) = \log( 1 + e^z) - \log(2)$, which is convex, non-decreasing, $\Cont{1}$, non-constant, and therefore satisfies the assumptions of \cref{def:polyconvexNN}.
Moreover, we initialize all biases $\bias{i}$ to zero.
These conditions together ensure that, in the rest configuration (i.e. $\F = \Id$), all the activations in Eq.~\eqref{eqn:Wnn} vanish.
Consequently, around the rest configuration, the activation function operates in the vicinity of $z\approx 0$, where its response is well-conditioned.

Controlling the activations alone, however, does not guarantee that the initialized constitutive model exhibits physically meaningful properties.
Indeed, since the Piola-Kirchhoff stress tensor is obtained by differentiating the strain energy density $\W$ with respect to the deformation gradient $\F$, it is essential to ensure that the gradients of the network with respect to its inputs neither vanish -- leading to an excessively compliant material -- nor blow up -- resulting in an unrealistically stiff response.
This issue is particularly delicate for the derivatives with respect to the invariants $\invI$ and $\invII$, whose associated coefficients are constrained to be non-negative:
because the linear layers also involve non-negative weights, successive layers may accumulate positive contributions, potentially amplifying gradients as the number of neurons increases.
To counteract this effect, we scale each linear layer by the inverse of the number of incoming neurons, as we detail below.

Moreover, to enforce positivity of the constrained weights, we adopt a standard reparametrization in which each constrained weight is expressed as the composition of an unconstrained trainable variable with a smooth positivity-enforcing function $\pos \colon \mathbb{R} \to \Rplus$.
Specifically, we use the softplus function $\pos(w) =  \log( 1 + e^w)$.

In conclusion, the resulting parametrization (for $i=1,\dots,\numLay$) reads:
\begin{equation}
    \weightI{i} = \pos(\BaseWeightI{i})
    , \qquad
    \weightII{i} = \pos(\BaseWeightII{i})
    , \qquad
    \weightZ{i} = \frac{1}{\numNeur{i}} \pos(\BaseWeightZ{i})
    , \qquad
    \weightOUT = \frac{1}{\numNeur{\numLay}} \pos(\BaseWeightOUT)
    ,
\end{equation}
where $\BaseWeightI{i}$, $\BaseWeightII{i}$, $\weightJ{i}$, $\BaseWeightZ{i}$, $\BaseWeightOUT$ are initialized independently according to Gaussian distributions with zero mean and standard deviation $\stdInit > 0$. 
Similarly, we parametrize $\weightGrowth$ as $\pos(\BaseWeightGrowth)$, with $\BaseWeightGrowth$ initialized to zero.

In \ifappendix Appendix~\ref{sec:initialization_proof}\else the Supplementary Information\fi, we show analytically that, thanks to the proposed scaling and parametrization, the expected value of the derivatives of the hyperelastic energy with respect to the invariants, evaluated at the reference configuration, is kept under control (i.e., it does not vanish nor it blows up) as the number of neurons increases.

\subsection{Neural-DFEM}

The material discovery method considered in this work is based on the formulation \eqref{eqn:discovery_DFEM} wherein the HNN architecture described in Sec.~\ref{sec:HNN} is employed to define the hypothesis space $\spaceW$.
The HNN parameters are learned by minimizing the loss function \eqref{eqn:discovery_DFEM}, whose evaluation involves the solution of a set of finite element equilibrium problems, one for each training experiment.
Crucially, the training procedure exploits a differentiable finite element solver, allowing gradients to be propagated through the equilibrium solution itself. 
As a result, the parameter updates explicitly account for the effect of the constitutive model on the mechanical response of the training specimens.
We refer to this approach as Neural Differentiable Finite Element Method (Neural-DFEM).

The practical realization of this framework entails a number of nontrivial challenges that must be addressed to ensure efficiency and robustness. In the following, we outline these challenges and describe how they are resolved within the Neural-DFEM framework.

\begin{itemize}
    \item The first challenge concerns the differentiation of the finite element solver.
    While automatic differentiation is employed to compute the derivatives of the HNN constitutive model with respect to trainable parameters, the equilibrium equations introduce an implicit dependence of the displacement field on these parameters.
    Since the governing equilibrium problem is nonlinear and solved through an iterative Newton method, differentiating the solver iterations directly would be computationally inefficient.
    Instead, we compute the required sensitivities by solving the algebraic adjoint system associated with the discretized equilibrium equations \cite{farrell2013automated}.
    This hybrid strategy combines automatic differentiation for the neural constitutive model with adjoint-based differentiation through the finite element solver, requiring the solution of one sparse linear system per training experiment to obtain the gradients with respect to the trainable constitutive parameters.
    {This strategy, together with some considerations on the computational cost, is detailed in Appendix~\ref{sec:sensitivities_computation}.}

    \item A second challenge arises from the computational cost associated with the evaluation of the loss function, which requires solving multiple finite element problems -- one for each training experiment -- at every training epoch. 
    This may potentially lead to a significant per-epoch cost.
    To mitigate this effect, we reduce the total number of training epochs by adopting a quasi-Newton optimization strategy, specifically the BFGS method \cite{nocedal2006numerical}.
    This choice is particularly effective in the present setting, as the neural constitutive model operates in a low-dimensional space (e.g., the space of strain invariants), resulting in a relatively small number of trainable parameters. 
    Consequently, the linear systems involved in the BFGS updates do not constitute a computational bottleneck.
    As a stopping criterion, the optimization is terminated when the relative improvement of the loss over the last 50 iterations falls below $10^{-4}$.

    In addition, the cost of the initial training phase is controlled through a load continuation strategy, which is used to compute a suitable initial equilibrium solution. 
    During subsequent training epochs, the finite element solver is warm-started using the equilibrium solution from the previous epoch. 
    Since the constitutive response evolves smoothly over the course of training, the equilibrium solution changes only marginally between consecutive epochs, leading to a modest cost for each finite element problem.

    \item A further challenge stems from the fact that the loss function depends on the solution of the equilibrium problem, which must remain {solvable} throughout the entire optimization process, from the initial parameter guess to the final learned model. 
    This requires {existence of solutions} to be ensured over the whole hypothesis space explored during training.
    In Neural-DFEM, this requirement is satisfied by the HNN architecture introduced in Sec.~\ref{sec:HNN}. 
    As proved therein, the HNN guarantees physical meaningfulness and {existence of equilibria} for all admissible parameter values.

    \item Despite the {existence of solutions to the equilibrium problem}, convergence of the finite element solver cannot be guaranteed in all cases, as the Newton method features only local convergence. 
    Two strategies can, in principle, be adopted to address this issue.
    A first option consists in introducing a continuation procedure to gradually transition between successive equilibrium solutions. 
    While effective, this approach increases the computational complexity of the training process.
    In Neural-DFEM, we instead adopt a more conservative strategy, to prevent parameter updates from moving the system outside the region in the parameter space of approximate linearity, where gradient-based directions remain reliable. 
    This is achieved by leveraging the backtracking mechanism embedded in the line search of the BFGS algorithm, so that trial steps leading to non-convergence of the Newton solver are automatically rejected. 
    As a result, the optimization proceeds while remaining within the basin of attraction of the equilibrium solution, without requiring additional continuation steps during training.

\end{itemize}

\section{Results}
\label{sec:results}

To investigate the capabilities of the proposed methods, we consider several synthetic test cases, starting from two-dimensional domains deformed under plane strain conditions (see Fig.~\ref{fig:test_cases_2D}), then moving to three-dimensional domains (see Fig.~\ref{fig:test_cases_3D}). 
The full description of the test cases is reported in Appendix~\ref{sec:test_cases}.

To generate training and testing data, we employ the finite element method, and we compute numerically the equilibrium configuration resulting from known hyperelastic models. Specifically, we consider three ground truth material models, namely the Ishihara (IH) \cite{ishihara1948statistical}, the Mooney-Rivlin (MR) \cite{mooney1940theory,rivlin1948large}, and the Fung (FU) \cite{fung2013biomechanics} hyperelastic constitutive laws (see Appendix~\ref{sec:material_models}). 
{In all numerical experiments, displacement observations comprise all components of the displacement vector and are sampled at finite-element mesh nodes. In the full-field setting, observations are available at all mesh nodes, whereas in the boundary-only setting they are available at all mesh nodes lying on the entire boundary $\partial\Omega$. Hence, the boundary-only observation operators considered here correspond to dense observations over the whole boundary. The mesh sizes employed for all test cases are reported in Appendix~\ref{sec:test_cases}.}

To take into account the measurement error that unavoidably affects any source of experimental data, we add an artificial independent Gaussian noise with prescribed standard deviation $\stdNoise$, that we will refer to as the noise level.
To quantify the discrepancy between model predictions and ground-truth data, we consider the root mean square error normalized by the displacement variance square root (vRMSE): by construction, a naïve predictor that outputs the mean value at each point attains vRMSE~=~1, whereas a perfect predictor with zero error yields vRMSE~=~0 (see \ifappendix Appendix~\ref{sec:metrics} \else Supplementary Information \fi for precise definitions).


\subsection{Plane strain deformations with full displacement field observation}

We first consider the case of two-dimensional domains with deformations taking place under plane strain conditions. Here we assume that the displacement field is measured over the whole domain (full displacement field observation). 
We take as benchmark cases the mechanical setups proposed in \cite{flaschel2021unsupervised}, and later considered in subsequent works \cite{flaschel2022discovering,flaschel2023automated,thakolkaran2022nn}.
Specifically, the models are trained based on the displacement field recorded from a square plate with a circular hole, subjected to an asymmetrical displacement-controlled biaxial tension with varying loading, hereafter referred to as Setup~1 (Fig.~\ref{fig:test_cases_2D}a).
The discovered models are then tested on a different geometry, namely a square plate with two ellipsoidal holes, and subjected to displacement-controlled uniaxial tension in the vertical direction, while horizontal sides are stress-free (Setup~2, see Fig.~\ref{fig:test_cases_2D}b).

\ifappendix\figureSetupTwoD\fi
\ifappendix\else\figureSetupTwoD\fi

First, we consider the IH model with a noise magnitude $\stdNoise = 10^{-3}$, representative of modern DIC setups \cite{thakolkaran2022nn}.
For the proposed method, we tune the hyperparameters using a simple grid search, selecting the configuration with the lowest training loss (more details in Appendix~\ref{sec:hyperparameters_tuning}). 
Notably, the presence of strong structural priors and the consequent robustness to overfitting allows us to rely on this straightforward selection criterion, eliminating the need for cross-validation, which is typically required to control overfitting.
For the selected architecture, we train 5 models by varying the random seed used to initialize the trainable parameters, and we select the model with lowest training loss.

As a benchmark in the full-field setting, we compare against EUCLID \cite{thakolkaran2022nn}, a representative VFM-based stress-label-free model-discovery method.
To provide a further reference, we also consider a traditional identification approach in which the functional form of the hyperelastic law is fixed \emph{a priori} -- here chosen as Neo-Hookean -- and only its material parameters are calibrated on the training data.
To provide a fair comparison, we consider the same training data and the same optimization algorithm as for Neural-DFEM.
In Fig.~\ref{fig:results_base_2D}a, we report boxplots of the error, obtained on the test specimen, as a function of the applied load.
The results show that the model discovery methods (i.e. VFM and Neural-DFEM) achieve much better results than the calibrated Neo-Hookean model.
Notably, Neural-DFEM consistently provides the most accurate results across all load values, with an average vRMSE of $\num{2.16e-3}$ against $\num{1.36e-2}$ of the VFM method (6.3 times smaller).

\begin{figure}
    \includegraphics[trim=0 14.58cm 0 0, clip, page=3]{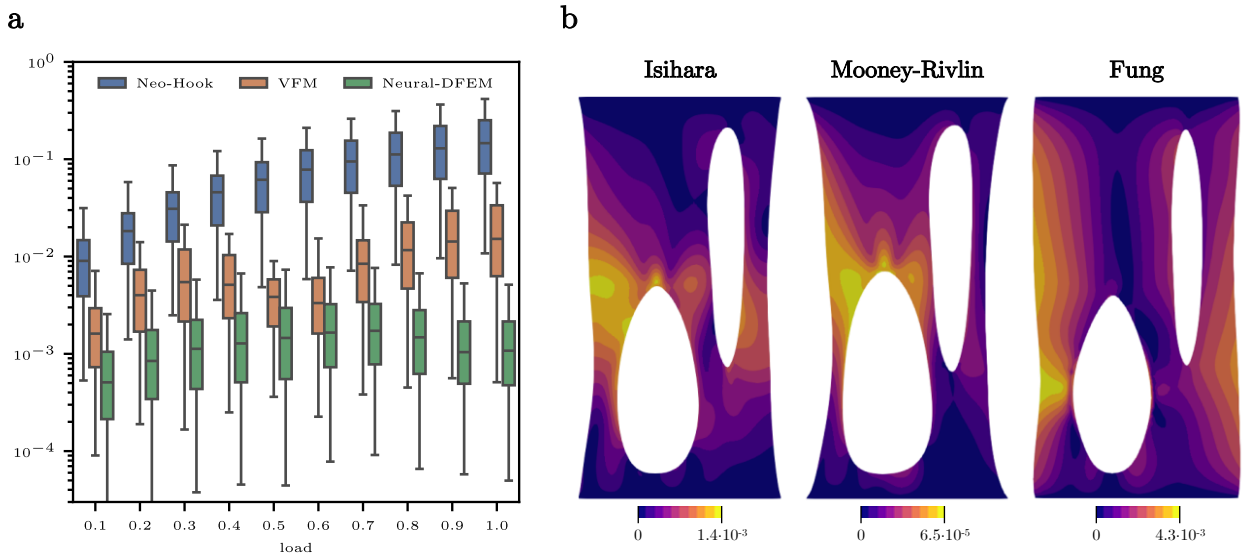}
    \vspace{-.9cm}
    \caption{
        \textbf{Results of 2D test cases (training on Setup~1, testing on Setup~2).} 
        (a) Boxplots of the displacement errors, normalized by the standard deviation of the ground-truth displacement, shown as a function of the applied load.
        The training data are generated using Setup~1 with $\stdNoise = 10^{-3}$, while the test data correspond to Setup~2.
        Three approaches are compared: classical parameter fitting of a Neo-Hookean material; EUCLID~\cite{thakolkaran2022nn}, based on the VFM; and the proposed Neural-DFEM.
        (b) Spatial distribution of the local displacement error for Setup~2 obtained with Neural-DFEM at the largest applied load (i.e., $\delta = 1$) and with $\stdNoise = 10^{-3}$, for the three material models considered. Note the different color scales used in the three cases.
        }
    \label{fig:results_base_2D}
\end{figure}

\subsection{Noise robustness}

Next, we investigate the noise robustness of Neural-DFEM, by considering noise levels up to two orders of magnitude larger than the one considered in the previous section (i.e. up to $\stdNoise = 10^{-1}$), for the three ground truth material models, namely IH, MR and FU.

In Fig.~\ref{fig:results_summary_2D}a we show the load-reaction curves of Setup~2, obtained for different noise levels and compared with the ground truth.
Except for the highest noise level in the FU case, the curves are practically indistinguishable, from a visual standpoint, from that obtained with the ground-truth model.

Besides Setup~2, we challenge the discovered models to predict the displacement under conditions that are even farther to the training ones. 
To this aim, we introduce Setup~3 (see Fig.~\ref{fig:test_cases_2D}c), consisting of 10 different perforated plates with a random number of holes -- ranging from 1 to 3 -- with random sizes and positions, as in \cite{zhang2025shape}.
On the sides of the plates, normal spring boundary conditions are applied, and two opposite sides are loaded with normal traction with varying magnitude, taking both positive (traction) and negative (compression) sign.

In Fig.~\ref{fig:results_summary_2D}b, we report boxplots of the errors in both the displacement and the reaction force, as a function of the noise level, for the three materials considered.
The figure shows the errors obtained not only for the training setup (Setup~1), but also for the two testing mechanical setups (Setups~2 and~3).
The results indicate that the constitutive laws discovered from Setup~1 generalize well to Setup~2 and Setup~3.
Indeed, for sufficiently low noise levels, a normalized error on the order of $10^{-2}$ or smaller is achieved in all cases.
As expected, the lower the noise in the training data, the more accurate is the model learned by Neural-DFEM.
Furthermore, Neural-DFEM exhibits good robustness to noise even at relatively high noise levels, which here reach up to two orders of magnitude higher than those considered reasonable in previous works and in typical applications~\cite{thakolkaran2022nn}.

\begin{figure}
    \includegraphics[trim=0 7.32cm 0 0, clip, page=4]{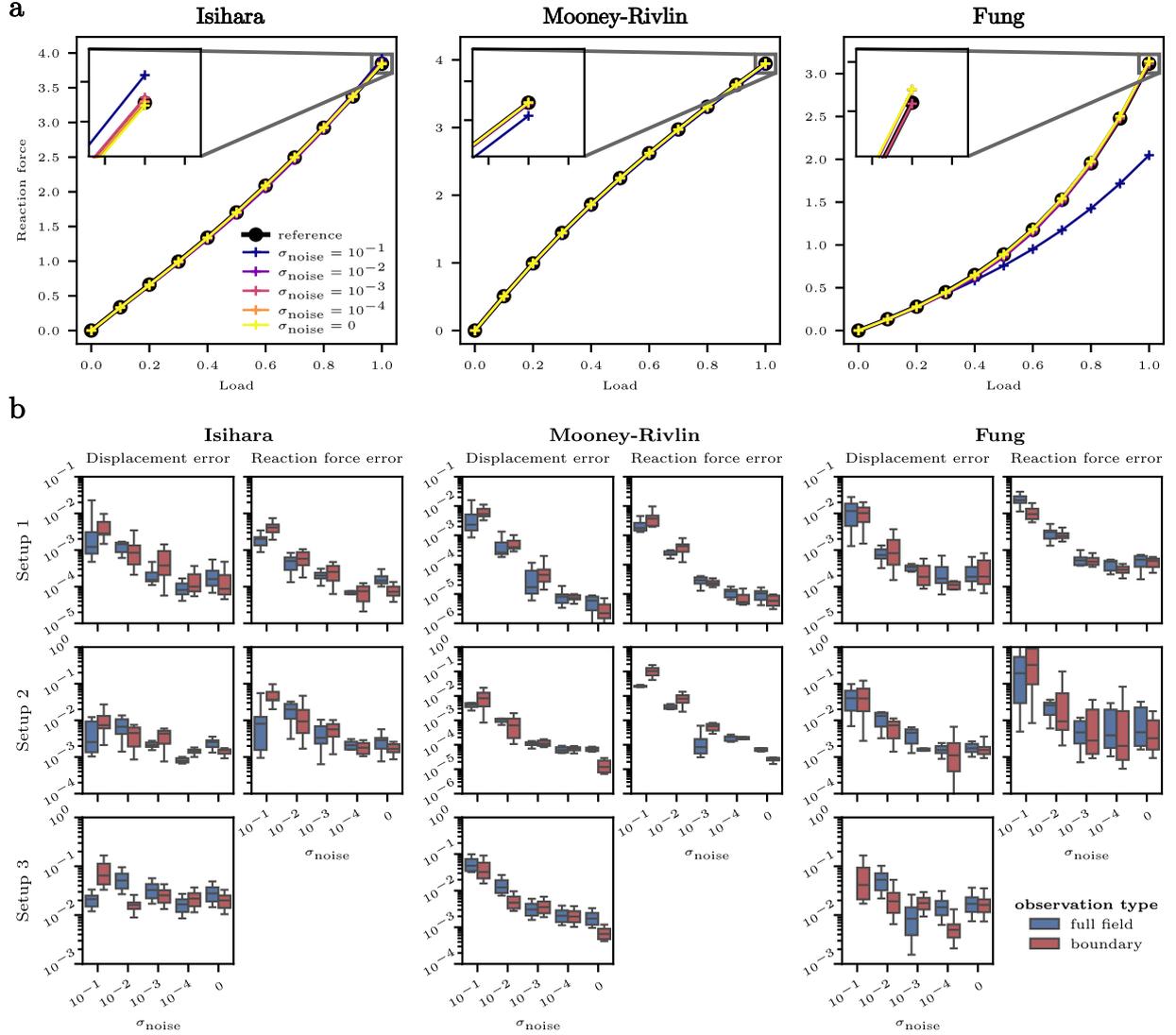}
    \caption{
        \textbf{Results of 2D test cases (training on Setup~1, testing on Setup~2 and Setup~3).} 
        (a) Predicted reaction force on the top boundary of Setup~2 as a function of the applied load and of the training data noise $\stdNoise$, compared with the reference reaction force obtained using the ground-truth material model.
        The insets show a magnified view of the force peak.
        (b) Boxplots of displacement and reaction force errors, both normalized by the standard deviation of the corresponding ground-truth, shown as a function of the training noise level $\stdNoise \in \{ 10^{-1},  10^{-2},  10^{-3},  10^{-4}, 0\}$. 
        Results are reported for the three material models considered (indicated at the top of each column) and for both the full-field and boundary observation cases (see the legend in the bottom-right panel).
        The three rows correspond to Setups~1--2--3, respectively. 
        Hence, while the first row shows the performance in the training setting (fitting regime), the second and third rows assess the ability of Neural-DFEM to generalize to mechanical setups different from those observed at training time.
        For Setup~3, no reaction force is defined, since no Dirichlet boundary conditions are prescribed.
        }
    \label{fig:results_summary_2D}
\end{figure}

Among the three materials, the highest accuracy is achieved for the MR model, suggesting that certain material behaviors may be more easily represented within the considered hypothesis space.
Moreover, generalization from Setup~1 to Setup~3 appears more challenging than to Setup~2, as indicated by the larger errors.
This aspect is further investigated in the next section.

\subsection{Distributional shift in strain-state space and generalization}

To shed light on the generalization capabilities of models discovered from a setup to a different setup, it is instructive to consider the strain state of the deformation involved.
Indeed, because the domain shape differs from one setup to another, to provide a portrait of the strain state that is comparable from one setup to another, we propose to consider the statistical distribution of the principal stretches ($\lambda_1$, $\lambda_2$).
As shown in Fig.~\ref{fig:test_cases_2D}d, the training data coming from Setup~1 mainly lay in a region comprised within the state of perfect biaxial tension ($\lambda_1 = \lambda_2 > 1$) and uniaxial tension ($\lambda_1 > 1$, $\lambda_2 = 1$).
This is not surprising, considering that the macroscopic tension state is an asymmetrical biaxial tension of the specimen.
In Setup~2, the macroscopic uniaxial tension translates at the microscopic level in a strain distribution that lies in proximity of the perfect uniaxial tension state.
As for Setup~3, the presence of compression load, not present in the first two setups, introduces a significant presence of microscopic strain states close to the perfect uniaxial compression state ($\lambda_1 = 1$, $\lambda_2 < 1$). 
Moreover, while Setup~1 and 2 employ a displacement-controlled load, Setup~3 is based on a tension-controlled load, with the loaded sides free to compress in the tangent direction due to the volumetric changes penalization of the materials, thus favouring the introduction of microscopic strain states close to the simple shear state ($\lambda_1 \cdot \lambda_2 = 1$).
As a consequence, the overall strain distribution in the specimens of the training setup (Setup~1) is closer to Setup~2 than to Setup~3. 
This explains the higher accuracy achieved in predicting the displacement for Setup~2 compared to Setup~3.

To quantify the distributional shift between training and test deformation regimes, we compute the Sinkhorn divergence between the empirical distributions of the principal stretches. 
The Sinkhorn divergence provides a measure of distributional mismatch based on entropy-regularized optimal transport \cite{cuturi2013sinkhorn}.
The results confirm that larger divergence values consistently correspond to test cases exhibiting stronger generalization gaps, thus supporting the interpretation that generalization performance is influenced by the coverage of the strain-state manifold during training.
In particular, comparing Setup~2 with Setup~3, the divergence increases from 0.17 to 0.32 for IH, from 0.24 to 0.39 for MR, and from 0.15 to 0.27 for the FU model. 

\subsection{Plane strain deformations with boundary observation}

We consider now the case when the training specimen displacement field is measured only at the boundary of the domain.
This case cannot be handled with VFM-based approaches, as the full displacement field should be available to evaluate the loss function.
Instead, the considered framework allows -- with a minor modification of the loss function with respect to the case with full-field observation -- to handle this case too.
We run again the same tests above, for the same noise levels.
The results, reported in Fig.~\ref{fig:results_summary_2D}b, show that a virtually identical accuracy as for the full-field observation condition is obtained in all the cases considered, both for Setup~2 and Setup~3.
More precisely, the vRMSE ratio between full-field and boundary observation cases is $1.03 \pm 0.67$ (mean $\pm$ standard deviation).
In the considered synthetic benchmarks, this result indicates that dense boundary displacement observations, combined with the measured global reactions, can contain sufficient information to identify a constitutive response with accuracy comparable to that obtained from full-field observations.

\subsection{3D specimens with boundary observations}

Finally, we consider three-dimensional specimens, which allow for a full range of deformation modes that were partially constrained in the previous plane-strain configurations.

For training, we consider Setup~4 (see Fig.~\ref{fig:test_cases_3D}a), a three-dimensional generalization of Setup~1, consisting of a cube containing a spherical inclusion and subjected to asymmetric triaxial displacement-controlled loads.
The learned models are then tested on two additional setups designed to challenge them under markedly different deformation regimes.
In Setup~5 (see Fig.~\ref{fig:test_cases_3D}b), the specimen is a cube with an oblique through-hole, subjected to a displacement-controlled load combining tension and torsion, while the remaining faces are stress-free.
In Setup~6 (see Fig.~\ref{fig:test_cases_3D}c), we consider a more complex, real-world-inspired geometry based on a three-dimensional mounting clip from an open-source CAD model~\cite{thingiverse_clip}. The clip is constrained by a roller-type condition on the bottom surface and subjected to combined follower and distributed forward loads. Additionally, normal-spring boundary conditions are imposed on selected surfaces to generate nontrivial internal stress states during loading. 
This setup is designed to assess generalization to markedly different geometries and mixed boundary conditions.

\ifappendix\figureSetupThreeD\fi
\ifappendix\else\figureSetupThreeD\fi

In Fig.~\ref{fig:results_summary_3D}a, we report the load-reaction curves obtained for Setup~5 at different noise levels with a boundary-only observation, while Figure~\ref{fig:results_summary_3D}b shows the boxplots of the displacement reconstruction error as a function of the noise level, both for the training setup and for the two testing setups.
The results show that Neural-DFEM successfully learns constitutive laws that generalize well across different configurations, also in the case of three-dimensional specimens. 
Remarkably, and consistently with the two-dimensional plane-strain experiments, the accuracy obtained with boundary-only observations is nearly identical to that achieved with full-field data (vRMSE ratio $1.03 \pm 0.41$).
Within these controlled benchmarks, the results indicate that the prescribed boundary-only observation operator, together with the available reaction data, can be sufficient for effective material discovery, even in fully three-dimensional settings.
Taking $\stdNoise = 10^{-3}$ and the boundary-only observation case as a reference, the vRMSE in Setups~5 and~6 is equal to $\num{6.79e-03}$ and $\num{1.54e-02}$ for IH, $\num{9.25e-04}$ and $\num{4.21e-04}$ for MR, and $\num{1.22e-03}$ and $\num{1.41e-03}$ for FU, respectively.
Across the considered noise levels, the method maintains low prediction errors, suggesting robustness to measurement noise.

\begin{figure}
    \includegraphics[trim=0 7.32cm 0 0, clip, page=5]{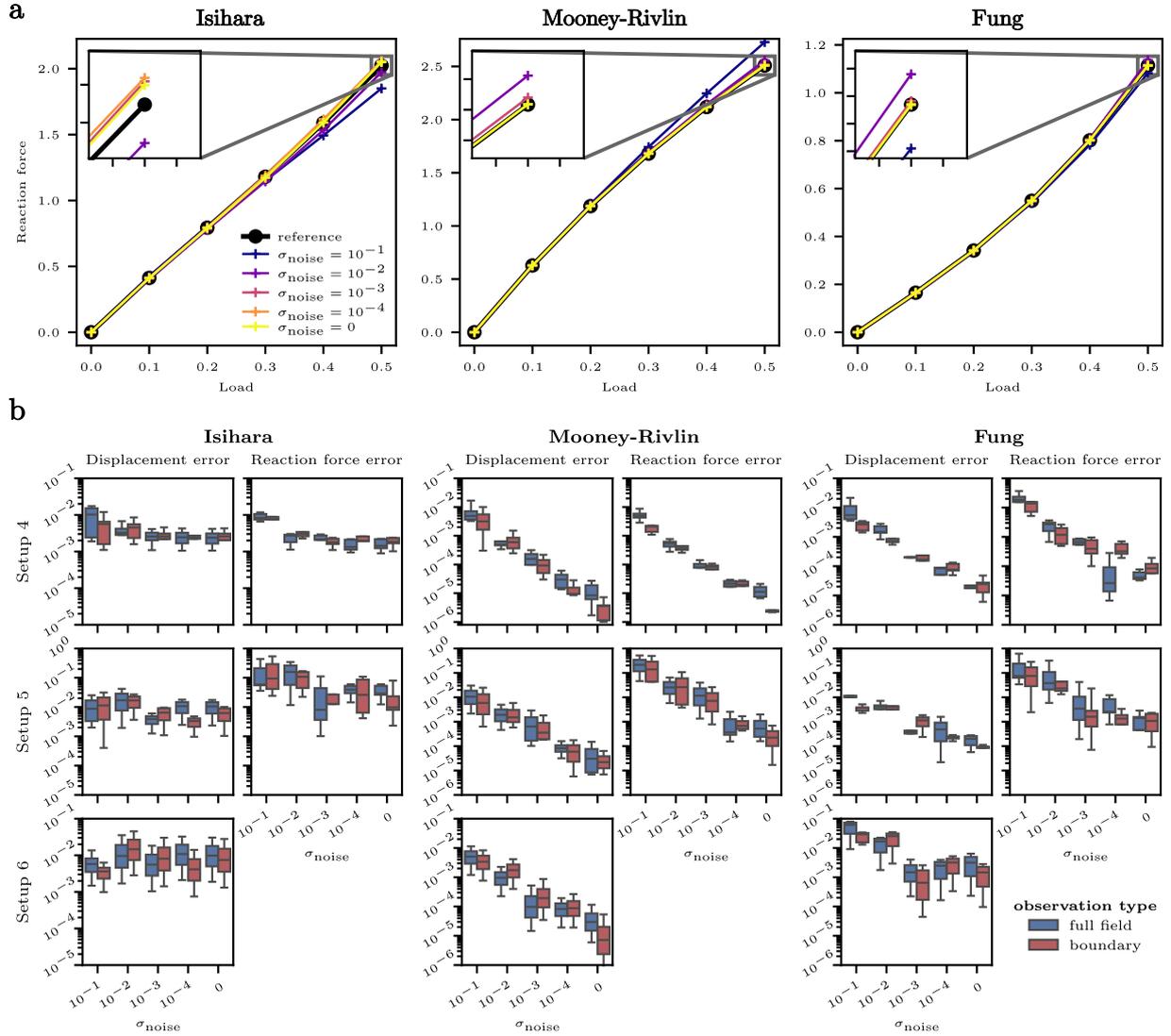}
    \caption{
        \textbf{Results of 3D test cases (training on Setup~4, testing on Setup~5 and Setup~6).} 
        (a) Predicted reaction force on the top boundary of Setup~5 as a function of the applied load and of the training data noise $\stdNoise$, compared with the reference reaction force obtained using the ground-truth material model.
        The insets show a magnified view of the force peak.
        (b) Boxplots of displacement and reaction force errors, both normalized by the standard deviation of the corresponding ground-truth, shown as a function of the training noise level $\stdNoise \in \{ 10^{-1},  10^{-2},  10^{-3},  10^{-4}, 0\}$, and of the observation type (full-fields vs boundary only, see legend). 
        Results are reported for the three material models considered (indicated at the top of each column) and for both the full-field and boundary observation cases (see the legend in the bottom-right panel).
        The three rows correspond to Setups~4--5--6, respectively. 
        Hence, while the first row shows the performance in the training setting (fitting regime), the second and third rows assess the ability of Neural-DFEM to generalize to mechanical setups different from those observed at training time.
        For Setup~6, no reaction force is defined, since no Dirichlet boundary conditions are prescribed.
        }
    \label{fig:results_summary_3D}
\end{figure}

For the IH material model, reducing the noise level in the training set leads to a saturation of the error, indicating that Neural-DFEM is unable to extract additional information from cleaner data.
This behavior is explained by the fact that the IH model is not polyconvex (as can be readily shown using \cite[Lemma~2.4]{hartmann2003polyconvexity}) and therefore cannot be approximated with arbitrary accuracy by a HNN, which is polyconvex by construction.
This behavior was not apparent in the plane-strain case, where the kinematics are restricted to a subset of deformations and the three invariants are no longer independent, effectively masking the non-polyconvex character of the IH model.
{Consequently, Neural-DFEM searches within the HNN hypothesis class for a polyconvex and thermodynamically admissible surrogate by minimizing the mismatch with the observed boundary responses.}
Importantly, despite this representational restriction, the identified surrogate still reproduces the macroscopic mechanical response with good accuracy (error of the order of $10^{-2}$).
In contrast, for the MR and FU material models, which are polyconvex, the approximation error decreases consistently as the noise level is reduced, {confirming increasingly accurate agreement with the underlying constitutive response} when it is not far from the considered hypothesis class.


Similarly to the two-dimensional case, the slightly larger error observed in Setup~6 compared to Setup~5 is explained by the fact that, as shown in Fig.~\ref{fig:test_cases_3D}d, the microscopic strain states in Setup~5 are closer to those of the training setup (Setup~4) than those in Setup~6.
Quantitatively, the Sinkhorn divergence from the empirical distribution of the principal stretches observed during training increases from Setup~5 to Setup~6: from 0.08 to 0.16 for IH, from 0.10 to 0.16 for MR, and from 0.08 to 0.15 for the FU model. 
This supports that generalization can be analyzed through the lens of the distributional shift in the strain distribution from the training to the test set.

\section{Discussion}
\label{sec:discussion}

This work shows that {hyperelastic isotropic} material model discovery can be successfully performed, at least in the considered synthetic test cases, even under partial observation regimes, relying solely on {dense measurements of} specimen boundary deformations and global reaction forces, and without paired strain--stress data. 
These results provide evidence that experimentally viable material discovery under partial observability may be feasible, although validation on real experimental data remains an important direction for future work.

The proposed framework is built upon a differentiable finite element solver embedded within the learning process. 
This design choice allows to establish a direct link between candidate constitutive laws and the resulting deformation fields, enabling a high flexibility in the formulation of the loss function, including, for instance, mean squared error terms evaluated on arbitrary subsets of the domain. 
This type of observation flexibility is not available in the same form in constitutive-learning approaches based on paired strain--stress data, nor in standard VFM-type formulations, where the measured displacement field is inserted directly into the equilibrium residual and must be sufficiently rich over the region where that residual is evaluated.
Moreover, because VFM relies on differentiating measured displacement fields -- an operation that amplifies measurement noise -- it typically necessitates a dedicated denoising stage. 
In contrast, the proposed approach avoids this requirement, promoting noise robustness.

Neural-DFEM relies on a neural-network-based constitutive ansatz, represented by the class of HNNs, which provides mathematically provable guarantees of physical and thermodynamic consistency, as well as {existence of solutions to} the associated equilibrium problem.
These properties are important in a solver-in-the-loop setting, because every intermediate constitutive model encountered during training must define a meaningful forward hyperelastic problem, not only the final learned model. 
They should, however, be distinguished from the practical convergence of the nonlinear finite element solver, which is addressed through the numerical strategies adopted in the training algorithm. 
In particular, quasi-Newton optimization, continuation, solver-aware backtracking, and the proposed initialization strategy jointly promote robustness and computational efficiency of the overall approach.
As a result, training can be carried out with limited computational resources. 
More precisely, all experiments were executed on a single CPU core of an AMD processor (up to 3.63 GHz), and training took on average 7.38 h (IH), 1.84 h (MR) and 1.38 h (FU) in the two-dimensional case; 
20.16 h (IH), 8.26 h (MR) and 24.79 h (FU) in the three-dimensional case.

An alternative paradigm that could, in principle, enable material discovery from partial observations is that of Physics-Informed Neural Networks (PINNs). 
However, our initial investigations (not reported here) revealed severe convergence difficulties for PINNs in this class of problems, even for comparatively simple configurations.
While further studies may be warranted, these observations suggest that approaches based on differentiable solvers are more effective for constitutive model discovery in mechanics.

In parallel with advances in material discovery, a growing body of work has focused on operator learning methods aimed at predicting the deformation of elastic bodies directly from data. 
Examples include approaches based on autoencoders \cite{kehls2025autoencoder} graph neural networks \cite{taghizadeh2025multifidelity}, PointNet architectures \cite{kashefi2023physics}, Universal Solution Manifold Networks \cite{zhang2025shape}, and DeepONets \cite{he2023novel}. 
While these methods can deliver extremely rapid predictions at inference time, they typically require large training datasets and often struggle to generalize to geometries and boundary conditions that differ substantially from those encountered during training. 
As such, these methods primarily function as surrogate models, acting as accelerators of numerical solvers trained on extensive sets of precomputed simulations that provide the dominant learning bias.
By contrast, the considered framework relies on strong inductive biases rooted in fundamental physical principles: the momentum balance equation is enforced explicitly at every training epoch through the finite element solver, and the constitutive model is constrained by requirements such as frame indifference, polyconvexity, and coercivity. 
These ingredients reduce the dependence on purely data-driven learning bias and promote generalization across geometries and boundary conditions well beyond those used during training, provided that they induce strain states sufficiently represented during training, as illustrated by the numerical experiments.
From this perspective, the learning task shifts from approximating the global, macroscopic response of a body to identifying its local, microscopic material behavior.

Consequently, within this framework, concepts such as generalization and extrapolation are most naturally interpreted at the level of local strain states. 
In particular, generalization across experimental setups can be understood as a form of distributional shift in the space of strain invariants. 
Our results indicate that predictive accuracy deteriorates as the strain-state distribution encountered at test time departs from that sampled during training, consistently with the broader machine-learning literature on out-of-distribution generalization.
This behavior is not specific to the present method but reflects a fundamental limitation of data-driven material discovery: when local deformation states are insufficiently represented in the training data, the inferred surrogate is constrained only by the inductive biases embedded in the hypothesis class, but too strong structural priors typically sacrifice expressivity.
{More in general, for a finite set of experiments, different constitutive responses may be compatible with the available measurements. 
Neural-DFEM therefore identifies an admissible hyperelastic material law consistent with the sampled observations and strain states, rather than guaranteeing unique recovery of the underlying strain-energy density over its entire domain.}
Therefore, analyzing the statistical distribution of local strain states and quantifying their distributional mismatch relative to the training regime provides a physically grounded lens through which generalization performance can be interpreted. 
Such analysis may serve as an \emph{a posteriori} diagnostic of prediction reliability and suggests a potential guideline for designing informative training configurations.

{An inherent limitation of the proposed approach is the trade-off between structural guarantees and expressivity of the HNN hypothesis class: since HNNs are restricted to polyconvex strain-energy densities, genuinely non-polyconvex constitutive responses cannot, in general, be represented with arbitrary accuracy, as illustrated by the Ishihara example in three dimensions.}
Another limitation of the present work is its restriction to isotropic materials. 
Nevertheless, the proposed HNN framework can be extended in a straightforward manner to anisotropic settings by incorporating anisotropic invariants into the model input. 
If the directions of anisotropy are known \emph{a priori}, the extension is immediate; if they are unknown, they may be introduced as additional variables to be inferred within the optimization process.
{The present framework is also restricted to path-independent hyperelasticity; extensions to history-dependent constitutive behavior would require substantial modifications to the current formulation.}
A further limitation of the present study is that the numerical assessment is restricted to synthetic data generated from known constitutive models. 
{In addition, data generation and identification employ the same finite-element discretization and solver.}
While this controlled setting enables a rigorous quantitative evaluation of generalization accuracy and robustness to noise {without confounding discretization effects, the reported absolute errors may be optimistic with respect to settings involving discretization mismatch. Validation on experimental measurements remains therefore} an important direction for future work. 
In particular, applying the proposed framework to displacement fields acquired through DIC or related experimental techniques will allow the impact of model discrepancies, measurement artifacts, and experimental uncertainties to be assessed.

Ultimately, this study highlights the importance of incorporating fundamental physical principles as inductive biases directly into machine learning models for scientific discovery in computational mechanics.
Constraining the hypothesis space to thermodynamically consistent {and mathematically admissible constitutive laws, satisfying sufficient conditions for the existence of equilibrium solutions,} and enforcing equilibrium throughout training structurally regularizes the inverse problem, enabling reliable generalization even under partial observability.

\section*{Acknowledgements}
The present research has received support from the project FIS, MUR, Italy 2025-2028, Project code: FIS-2023-02228, CUP: D53C24005440001, ``SYNERGIZE: Synergizing Numerical Methods and Machine Learning for a new generation of computational models''.
The author acknowledges MUR, grant Dipartimento di Eccellenza 2023-2027.
The author is member of GNCS, ``Gruppo Nazionale per il Calcolo Scientifico'' (National Group for Scientific Computing) of INdAM (Istituto Nazionale di Alta Matematica).

\section*{Source code and data}

The source code supporting the findings of this study is publicly available in the Neural-DFEM GitHub repository at \textcolor{blue}{\url{https://github.com/FrancescoRegazzoni/Neural-DFEM}}.

The repository includes the implementation of the proposed method and neural constitutive architecture, together with the scripts and configuration files required to reproduce the numerical experiments reported in this work.

\appendix
\section*{Appendix}
\addcontentsline{toc}{section}{Appendix}

\section{Material models} \label{sec:material_models}

We consider the following ground-truth constitutive laws.

\begin{itemize}
    \item \textbf{Ishihara (IH)} \cite{ishihara1948statistical}, 
    with $C_1 = 0.5$, $C_2 = 1$, $C_3 = 3$, $K = 1.5$:
    \begin{equation}
        \W(\F) = C_1(\isoinvI-3) + C_2(\isoinvII-3) + C_3(\isoinvI-3)^2 + K(\J-1)^2;
    \end{equation}

    \item \textbf{Mooney-Rivlin (MR)} \cite{mooney1940theory,rivlin1948large},
    with $C_1 = 1.0$, $C_2 = 0.8$, $K = 1$:
    \begin{equation}
        \W(\F) = C_1 (\invI - 3 - 2\log\J) + C_2 (\invII - 3 - 4\log\J) + \frac{1}{2}K(\J-1)\log\J;
    \end{equation}
    
    \item \textbf{Fung (FU)} \cite{fung2013biomechanics},
    with $C = 1$, $b = 3$, $K = 1.5$:
    \begin{equation}
        \W(\F) = \frac{C}{2b} \left(e^{b (\isoinvI - 3)} - 1\right) + \frac{K}{4} ((J-1)^2 + \log(J)^2).
    \end{equation}
\end{itemize}

\section{Mechanical setups of test cases} \label{sec:test_cases}

In this section, we provide details regarding the test cases considered in this work.

\paragraph{Setup~1.} 
Setup~1, taken from \cite{flaschel2021unsupervised}, consists of a square plate with edge length 2 containing a central circular hole of radius 0.1, as shown in Fig.~\ref{fig:test_cases_2D}a.
The mesh consists of 1441 nodes.
The plate is subjected to asymmetric, displacement-controlled biaxial tension, with prescribed displacements of $\delta$ in the vertical direction and $\delta/2$ in the horizontal direction, with $\delta \in \{ 0.1, 0.2, \dots, 0.8\}$.
For computational convenience, only the top-left quarter of the plate is simulated, with symmetry boundary conditions applied to account for the remaining portion of the domain.
Specifically, the following boundary conditions are applied:
\begin{equation} 
    \begin{aligned}
        &\displ \cdot \normal = 0, \, &&(\Id - \normal\otimes\normal)(\Piola\normal) = \mathbf{0} 
        && \qquad \text{on } \Gamma_{\text{left}} \cup \Gamma_{\text{down}}, 
        \\
        &\displ \cdot \normal = \delta, \, &&(\Id - \normal\otimes\normal)(\Piola\normal) = \mathbf{0} 
        && \qquad \text{on } \Gamma_{\text{up}}, 
        \\
        &\displ \cdot \normal = \delta/2, \, &&(\Id - \normal\otimes\normal)(\Piola\normal) = \mathbf{0} 
        && \qquad \text{on } \Gamma_{\text{right}}, 
        \\
        &\Piola \normal = \mathbf{0}  &&
        && \qquad \text{on } \partial\Omega \setminus (\Gamma_{\text{down}} \cup \Gamma_{\text{right}} \cup \Gamma_{\text{up}} \cup \Gamma_{\text{left}}).\\
    \end{aligned}
\end{equation}

\paragraph{Setup~2.} 

Setup~2, also from \cite{flaschel2021unsupervised}, consists of square plate with two ellipsoidal holes, as shown in Fig.~\ref{fig:test_cases_2D}b. The mesh has 4334 nodes. The specimen is subjected to displacement-controlled uniaxial tension in the vertical direction, with displacement $\delta \in \{0.1, 0.2, \dots, 1.0\}$, while the remaining portions of the boundary sides (including those of the inner holes) are traction-free.
The boundary conditions read as follows:
\begin{equation} 
    \begin{aligned}
        &\displ = \mathbf{0}
        && \qquad \text{on } \Gamma_{\text{down}}, 
        \\
        &\displ = 
        \begin{pmatrix} 0 \\ \delta \end{pmatrix} 
        && \qquad \text{on } \Gamma_{\text{up}}, 
        \\
        &\Piola \normal = \mathbf{0} 
        && \qquad \text{on } \partial\Omega \setminus (\Gamma_{\text{down}} \cup \Gamma_{\text{up}}).\\
    \end{aligned}
\end{equation}

\paragraph{Setup~3.} 

Setup~3, adapted from \cite{zhang2025shape}, consists of 10 distinct domains, each defined as a square plate of unit edge length containing a random number of circular holes -- ranging from 1 to 3 -- with randomly sampled sizes and positions.
Some samples are displayed in Fig.~\ref{fig:test_cases_2D}c. 
The number of mesh nodes per each geometry ranges between 5300 and 6000.
The plates are subjected to normal-spring boundary conditions (with constant $k_{\perp} = 0.01$) and loaded on the two vertical sides by a uniformly distributed traction $F \in \{-0.1, 0.1, 0.2, 0.3\}$, where negative values correspond to compressive loading and positive values to tensile loading.
More precisely, the boundary conditions are given by:
\begin{equation} 
    \begin{aligned}
        &\Piola \normal + k_{\perp} (\normal\otimes\normal) \displ = \mathbf{0} 
        && \qquad \text{on } \Gamma_{\text{top}} \cup \Gamma_{\text{down}}, 
        \\
        &\Piola \normal + k_{\perp} (\normal\otimes\normal) \displ = F \normal 
        && \qquad \text{on } \Gamma_{\text{left}} \cup \Gamma_{\text{right}}, 
        \\
        &\Piola \normal = \mathbf{0} 
        && \qquad \text{on } \partial\Omega \setminus (\Gamma_{\text{down}} \cup \Gamma_{\text{right}} \cup \Gamma_{\text{up}} \cup \Gamma_{\text{left}}).\\
    \end{aligned}
\end{equation}

\paragraph{Setup 4.}
Setup 4 (see Fig.~\ref{fig:test_cases_3D}a) represents a three-dimensional generalization of the two-dimensional Setup~1.
It consists of a cube with an edge length 2 and a circular hole with radius 0.5.
The mesh consists of 7093 nodes.
Similarly to Setup~1, for symmetry reasons we simulate only 1/8 of the domain.
The specimen is subjected to asymmetric, displacement-controlled triaxial tension, with prescribed displacements of $\delta$, $\delta/2$ and $\delta/4$ in the three orthogonal directions, with $\delta \in \{ 0.1, 0.2, \dots, 0.5\}$.
The boundary conditions hence read as follows:
\begin{equation} 
    \begin{aligned}
        &\displ \cdot \normal = 0, \, &&(\Id - \normal\otimes\normal)(\Piola\normal) = \mathbf{0} 
        && \qquad \text{on } \Gamma_{\text{left}} \cup \Gamma_{\text{down}} \cup \Gamma_{\text{front}}, 
        \\
        &\displ \cdot \normal = \delta, \, &&(\Id - \normal\otimes\normal)(\Piola\normal) = \mathbf{0} 
        && \qquad \text{on } \Gamma_{\text{right}}, 
        \\
        &\displ \cdot \normal = \delta/2, \, &&(\Id - \normal\otimes\normal)(\Piola\normal) = \mathbf{0} 
        && \qquad \text{on } \Gamma_{\text{back}}, 
        \\
        &\displ \cdot \normal = \delta/4, \, &&(\Id - \normal\otimes\normal)(\Piola\normal) = \mathbf{0} 
        && \qquad \text{on } \Gamma_{\text{up}}, 
        \\
        &\Piola \normal = \mathbf{0}  &&
        && \qquad \text{on } \partial\Omega \setminus (\Gamma_{\text{up}} \cup \Gamma_{\text{down}} \cup \Gamma_{\text{left}} \cup \Gamma_{\text{right}} \cup \Gamma_{\text{front}} \cup \Gamma_{\text{back}}).\\
    \end{aligned}
\end{equation}

\paragraph{Setup 5.}
Setup~5 consists of a cubic specimen featuring an oblique through-hole, as illustrated in Fig.~\ref{fig:test_cases_3D}b, based on a 2585-node mesh.
One face of the specimen is fully constrained by imposing zero displacement, while the opposite face is subjected to a displacement-controlled load combining two components: a uniaxial traction of magnitude $\delta$ and a torsional deformation with rotation angle $\theta = \tfrac{2\pi}{5}\,\delta$, where $\delta \in \{0.1, 0.2, \dots, 0.5\}$.
Specifically, the applied boundary conditions are defined as follows, where $\mathbf{x} = (x,y,z)^\mathsf{T}$ denotes the spatial coordinates, with the $z$-axis oriented along the vertical direction:
\begin{equation} 
    \begin{aligned}
        &\displ = \mathbf{0}
        && \qquad \text{on } \Gamma_{\text{down}}, 
        \\
        &\displ = 
        \begin{pmatrix} (\cos\theta-1)x-\sin\theta \, y \\ \sin\theta \, x+(\cos\theta-1)y \\ \delta \end{pmatrix}
        && \qquad \text{on } \Gamma_{\text{up}}, 
        \\
        &\Piola \normal = \mathbf{0} 
        && \qquad \text{on } \partial\Omega \setminus (\Gamma_{\text{down}} \cup \Gamma_{\text{up}}).\\
    \end{aligned}
\end{equation}

\paragraph{Setup 6.}
Setup 6 involves a more complex geometry than the previous cases, based on a three-dimensional mounting clip from a publicly available open-source CAD model~\cite{thingiverse_clip}. 
This setup is designed to assess the generalization capability of the proposed method to markedly different geometries and mixed boundary conditions, representative of nontrivial, real-world-inspired configurations.
The mesh features 12729 nodes.
The geometry, shown in Fig.~\ref{fig:test_cases_3D}c, is constrained by imposing zero vertical displacement on the bottom surface, corresponding to a roller-type (sliding) boundary condition. The two arms of the clip are actuated by a follower load of magnitude $F \in \{0.01, 0.02, \dots, 0.06\}$, which tends to open them, as illustrated in the figure, while a uniformly distributed forward load (i.e. in the direction $\mathbf{e}_{\text{front}}$) of magnitude~$F/5$ is simultaneously applied.
In addition, the body is subjected to normal-spring boundary conditions on both the front surface, with spring constant $k_{\text{front}} = 0.01$, and the surface of the central hole, with spring constant $k_{\text{hole}} = 0.1$. These elastic supports are introduced to induce nontrivial internal stress states when the clip is pushed forward.
To summarize, the boundary conditions are as follows:
\begin{equation} 
    \begin{aligned}
        &\displ \cdot \normal = 0, \quad(\Id - \normal\otimes\normal)(\Piola\normal) = \mathbf{0} 
        && \qquad \text{on } \Gamma_{\text{down}}, 
        \\
        &\Piola \normal + k_{\text{front}} (\normal\otimes\normal) \displ = \mathbf{0} 
        && \qquad \text{on } \Gamma_{\text{front}}, 
        \\
        &\Piola \normal + k_{\text{hole}} (\normal\otimes\normal) \displ = \mathbf{0} 
        && \qquad \text{on } \Gamma_{\text{hole}}, 
        \\
        &\Piola \normal = - F \J \F^{-T} \normal - F/5 \, \mathbf{e}_{\text{front}}
        && \qquad \text{on } \Gamma_{\text{back}}, 
        \\
        &\Piola \normal = \mathbf{0}
        && \qquad \text{on } \partial\Omega \setminus (\Gamma_{\text{up}} \cup \Gamma_{\text{down}} \cup \Gamma_{\text{front}} \cup \Gamma_{\text{hole}} \cup \Gamma_{\text{back}}).\\
    \end{aligned}
\end{equation}

\section{Error metrics}
\label{sec:metrics}

Given a collection of $N$ points, and associated displacement vectors $\mathbf{d}_i$, for $i = 1,\dots,N$, we define the average displacement as
\begin{equation}
    \overline{\mathbf{d}} = \frac{1}{N} \sum_{i=1}^N \mathbf{d}_i,
\end{equation}
and the displacement variance as 
\begin{equation}
    \Var(\mathbf{d}) = \frac{1}{N} \sum_{i=1}^N \| \mathbf{d}_i - \overline{\mathbf{d}}\|^2.
\end{equation}
Now, let us consider a collection of reference displacements $\hat{\mathbf{d}}_i$, for $i = 1,\dots,N$. We define the \textit{root mean square error} (RMSE) between $\{\mathbf{d}_i\}_i$ and $\{\hat{\mathbf{d}}_i\}_i$ as: 
\begin{equation}
    \text{RMSE} = \sqrt{\frac{1}{N} \sum_{i=1}^N \| \mathbf{d}_i - \hat{\mathbf{d}}_i\|^2}.
\end{equation}
Consequently, we define the \textit{variance-normalized root mean square error} (vRMSE) as:
\begin{equation}
    \text{vRMSE} = \frac{\text{RMSE}}{\sqrt{\Var(\hat{\mathbf{d}})}}.
\end{equation}
This definition is such that a naïve predictor, predicting the average displacement in each point, would score vRMSE = 1, while a perfect predictor achieving zero error satisfies vRMSE = 0.


\section{Canonical deformations}
\label{sec:canonical_deformations}

In Fig.~\ref{fig:test_cases_2D}c, in addition to the distribution of principal stretches corresponding to the considered mechanical setups, the principal stretches associated with the following canonical deformations are shown for reference:
\begin{itemize}
    \item Uniaxial tension: $\F = \begin{pmatrix} 1+\delta & 0 \\ 0 & 1 \end{pmatrix}$;
    \item Uniaxial compression: $\F = \begin{pmatrix} \frac{1}{1+\delta} & 0 \\ 0 & 1 \end{pmatrix}$;
    \item Biaxial tension: $\F = \begin{pmatrix} 1+\delta & 0 \\ 0 & 1+\delta \end{pmatrix}$;
    \item Biaxial compression: $\F = \begin{pmatrix} \frac{1}{1+\delta} & 0 \\ 0 & \frac{1}{1+\delta} \end{pmatrix}$;
    \item Simple shear: $\F = \begin{pmatrix} 1 & \delta \\ 0 & 1 \end{pmatrix}$.
\end{itemize}
In all cases, the range $\delta \in [0, 0.5]$ is represented in the figure.


\section{Sensitivities computation} \label{sec:sensitivities_computation}

In this section, we describe how sensitivities of the loss function with respect to the trainable parameters are computed by combining adjoint-based differentiation through the finite element equilibrium problem with automatic differentiation.

To this aim, it is useful to write the Neural-DFEM formulation in fully discrete form. For the $i$-th experiment, let us associate with the finite element displacement field $\displ_h^i$ (see \cref{sec:FEM_formulation}) the vector of free degrees of freedom
\begin{equation}
    \displDOF^i = (d_1^i,d_2^i,\ldots,d_{\numDOF^i}^i)^T
    \in \mathbb{R}^{\numDOF^i}.
\end{equation}
Furthermore, let $\NNparams \in \mathbb{R}^{\NNnumparams}$ denote the vector collecting all trainable parameters of the HNN. The discrete equilibrium problem associated with the $i$-th experiment then consists in finding $\displDOF^i \in \mathbb{R}^{\numDOF^i}$ such that
\begin{equation} \label{eqn:equilibrium_discrete}
    \DiscreteBilinearResidual^i(\displDOF^i;\NNparams) = \mathbf{0},
\end{equation}
where the components of the discrete residual are defined by
\begin{equation}
    \DiscreteBilinearResidual^i(\displDOF^i;\NNparams)
    \cdot \mathbf{e}_n
    =
    \bilinearResidual^i(\displ_h^i,\basisfun{n}^i),
    \qquad n=1,\ldots,\numDOF^i.
\end{equation}
Here and in the following, the correspondence between $\displDOF^i$ and the associated finite element function $\displ_h^i$, as well as that between the trainable parameters $\NNparams$ and the corresponding material law $\W$, is left implicit.

Using this notation, we define the objective functional associated with the $i$-th experiment as
\begin{equation}
    \lossDispl^i(\displDOF^i,\NNparams)
    =
    \sum_{j=1}^{\numOBSpts{i}}
    \|
        \displOBS{i}{j}
        -
        \displ_h^i(\ptsOBS{i}{j})
    \|^2
    +
    \alpha_R
    \sum_{k=1}^{\numOBSreaction{i}}
    \|
        \reactionOBS{i}{k}
        -
        \reaction{i}{k}(\displ_h^i)
    \|^2 .
\end{equation}
For a given set of trainable parameters $\NNparams$, let $\displDOF^i(\NNparams)$ denote the solution of \eqref{eqn:equilibrium_discrete}. We can then introduce the reduced functional
\begin{equation}
    \lossParam^i(\NNparams)
    =
    \lossDispl^i\bigl(\displDOF^i(\NNparams),\NNparams\bigr),
\end{equation}
in which the displacement degrees of freedom have been eliminated through the equilibrium constraint.

The training problem can therefore be written as
\begin{equation}
    \NNparams^\ast
    =
    \argmin{\NNparams\in\mathbb{R}^{\NNnumparams}}
    \sum_{i=1}^{\numOBS}
    \lossDispl^i(\displDOF^i,\NNparams)
    \qquad
    \text{subject to }
    \DiscreteBilinearResidual^i(\displDOF^i;\NNparams)=\mathbf{0},
    \quad i=1,\ldots,\numOBS,
\end{equation}
or, equivalently, in reduced form,
\begin{equation}
    \NNparams^\ast
    =
    \argmin{\NNparams\in\mathbb{R}^{\NNnumparams}}
    \sum_{i=1}^{\numOBS}
    \lossParam^i(\NNparams).
\end{equation}
Training requires the gradient of the reduced loss with respect to the HNN parameters. Rather than differentiating through the individual iterations of the nonlinear finite element solver, we account for the implicit dependence of $\displDOF^i$ on $\NNparams$ through an algebraic adjoint formulation \cite{farrell2013automated}. Specifically, for each experiment we compute the adjoint variable
$\displDOFadjoint^i\in\mathbb{R}^{\numDOF^i}$ by solving
\begin{equation}
    \label{eqn:adjoint_system}
    \left(
        \frac{\partial
        \DiscreteBilinearResidual^i}
        {\partial \displDOF^i}
    \right)^T
    \displDOFadjoint^i
    =
    \nabla_{\displDOF^i}
    \lossDispl^i(\displDOF^i,\NNparams).
\end{equation}
All quantities in \eqref{eqn:adjoint_system} are evaluated at the equilibrium solution
$\displDOF^i=\displDOF^i(\NNparams)$. The gradient of the reduced functional is then given by
\begin{equation}
    \label{eqn:sensitivity_computation}
    \nabla_{\NNparams}\lossParam^i(\NNparams)
    =
    -
    \left(
        \frac{\partial
        \DiscreteBilinearResidual^i}
        {\partial \NNparams}
    \right)^T
    \displDOFadjoint^i
    +
    \nabla_{\NNparams}
    \lossDispl^i(\displDOF^i,\NNparams).
\end{equation}
The gradient of the total objective is finally obtained by summing the contributions of all experiments:
\begin{equation}
    \nabla_{\NNparams}
    \sum_{i=1}^{\numOBS}\lossParam^i(\NNparams)
    =
    \sum_{i=1}^{\numOBS}
    \nabla_{\NNparams}\lossParam^i(\NNparams).
\end{equation}
Automatic differentiation is used to evaluate the explicit derivatives of the HNN-dependent quantities entering the discrete residual and the loss function, whereas the implicit dependence of the equilibrium displacement on the trainable parameters is handled through the adjoint problem \eqref{eqn:adjoint_system}. In particular, the Newton iterations used to solve the nonlinear equilibrium problem are not unrolled and differentiated individually.

From a computational viewpoint, a complete loss-and-gradient evaluation consists of the following steps:
\begin{itemize}
    \item For each experiment, the equilibrium problem \eqref{eqn:equilibrium_discrete} is solved using Newton's method, warm-started from the equilibrium solution obtained at the previous optimization iteration. If $n_{\mathrm{Newt}}^i$ Newton iterations are required for experiment $i$, this step involves $n_{\mathrm{Newt}}^i$ sparse linear solves of dimension $\numDOF^i$.

    \item Once equilibrium has been reached, the adjoint problem \eqref{eqn:adjoint_system} is solved. This requires one additional sparse linear solve of dimension $\numDOF^i$, whose system matrix is the transpose of the equilibrium Jacobian evaluated at the converged solution. Formula \eqref{eqn:sensitivity_computation} is then used to obtain the gradient with respect to all $\NNnumparams$ trainable parameters. Consequently, the number of additional finite element linear solves required by the adjoint gradient computation is independent of the number of trainable parameters.

    \item The parameters $\NNparams$ are updated according to the selected quasi-Newton optimization method. When line search is employed, trial parameter values may require additional forward equilibrium solves to evaluate the loss. An additional adjoint solve is required whenever the gradient is also evaluated at a trial point.
\end{itemize}

Hence, disregarding additional line-search evaluations, one complete evaluation of the objective and its gradient requires
\begin{equation}
    \sum_{i=1}^{\numOBS}
    \left(n_{\mathrm{Newt}}^i+1\right)
\end{equation}
sparse finite element linear solves. The additional ``$+1$'' term corresponds to the adjoint problem and represents the main computational overhead associated with differentiation through the equilibrium constraint.


\section{HNN initialization} 
\label{sec:initialization_proof}

In this section, we analytically prove that, under the proposed parametrization and initialization strategy, the derivative of a HNN with respect to the invariants $\invI$ and $\invII$ at the reference configuration remains bounded as the number of neurons increases. 
For brevity, we present the derivation for $\invI$ only, as the same arguments apply to $\invII$.

\paragraph{Case with skip connections.}

Consider first the architecture including skip connections. 
From \eqref{eqn:Wnn}, we obtain
\begin{equation} \label{eqn:dWnn_dI1}
    \left\{
    \begin{aligned}
        \frac{\partial\neur_{1}}{\partial \invI} 
        & = \weightI{1} 
            \odot \act'(      \weightI{1}(\invI - 3) 
                            + \weightII{1}(\invII - 3)
                            + \weightJ{1}(\J - 1) 
                            + \bias{1}
                            ), &\\
        \frac{\partial\neur_{i}}{\partial \invI} 
        & = \left(\weightI{i} + \weightZ{i-1} \frac{\partial\neur_{i-1}}{\partial \invI} \right)
            \odot \act'(      \weightI{i}(\invI - 3) 
                            + \weightII{i}(\invII - 3) 
                            + \weightJ{i}(\J - 1) 
                            + \weightZ{i-1} \neur_{i-1}
                            + \bias{i} 
                            ), &\quad\text{for } i = 2, \ldots, \numLay\\
        \frac{\partial\neurOut}{\partial \invI} & = \Wconst \, \weightOUT \cdot \frac{\partial\neur_{\numLay}}{\partial \invI},&
    \end{aligned}
    \right.
\end{equation}
where $\odot$ denotes the Hadamard (i.e., componentwise) product. 
In the reference configuration ($\F=\Id$), all pre-activations vanish, hence $\act'(\cdot)=\act'(0)$. Therefore,
\begin{equation} \label{eqn:dWnn_dI1_Id}
    \left\{
    \begin{aligned}
        \frac{\partial\neur_{1}}{\partial \invI} 
        & = \weightI{1}  \act'(0), &\\
        \frac{\partial\neur_{i}}{\partial \invI} 
        & = \left(\weightI{i} + \weightZ{i-1} \frac{\partial\neur_{i-1}}{\partial \invI} \right) \act'(0), &\quad\text{for } i = 2, \ldots, \numLay\\
        \frac{\partial\neurOut}{\partial \invI} & = \Wconst \, \weightOUT \cdot \frac{\partial\neur_{\numLay}}{\partial \invI}.&
    \end{aligned}
    \right.
\end{equation}
Taking expectations with respect to the random initialization (leveraging independence and identical distributions of the trainable parameters), and denoting $\Exp{\pos(\stdInit Z)} = \expW$, where $Z \sim \mathcal{N}(0,1)$ is a standard Gaussian random variable, we obtain the recurrence:
\begin{equation}
    \left\{
    \begin{aligned}
        \Exp{\frac{\partial\neur_{1}}{\partial \invI}}
        & = \expW \act'(0) \ones{\numNeur{1}}, &\\
        \Exp{\frac{\partial\neur_{i}}{\partial \invI}}
        & = \expW \act'(0) \left(\ones{\numNeur{i}} + \frac{1}{\numNeur{i-1}} \ones{\numNeur{i}\times\numNeur{i-1}} \Exp{\frac{\partial\neur_{i-1}}{\partial \invI}} \right) , &\quad\text{for } i = 2, \ldots, \numLay\\
        \Exp{\frac{\partial\neurOut}{\partial \invI}} 
        & = \Wconst \, \expW \, \frac{1}{\numNeur{\numLay}} \ones{\numNeur{\numLay}} \cdot \Exp{\frac{\partial\neur_{\numLay}}{\partial \invI}}.&
    \end{aligned}
    \right.
\end{equation}
By defining the constant $\gamma = \expW \act'(0)$, we have by induction:
\begin{equation}
    \begin{aligned}
    \Exp{\frac{\partial\neur_{2}}{\partial \invI}} &= \gamma \left(1 + \gamma \right) \ones{\numNeur{2}} \\
    \Exp{\frac{\partial\neur_{3}}{\partial \invI}} &= \gamma \left(1 + \gamma \left(1 + \gamma \right) \right) \ones{\numNeur{3}} \\
    \vdots\quad\; & \\
    \Exp{\frac{\partial\neur_{i}}{\partial \invI}} &= \sum_{k = 1}^i \left(\gamma \right)^k \ones{\numNeur{i}} 
    = \frac{\gamma}{1-\gamma} \left(1 - \gamma^i\right) \ones{\numNeur{i}} \\
    \end{aligned}
\end{equation}
where the last equation holds for any $i = 1,\dots,\numLay$.
In the above calculation, we have exploited the identity $\frac{1}{\numNeur{i-1}} \ones{\numNeur{i}\times\numNeur{i-1}} \ones{\numNeur{i-1}} = \ones{\numNeur{i}}$.
Finally, the derivative of the output neuron with respect to $\invI$ reads:
\begin{equation}\label{eqn:exp_dWnn_dI1_skip-conn}
\Exp{\frac{\partial\neurOut}{\partial \invI}} 
        = \Wconst \, \expW \, 
        \frac{\gamma}{1-\gamma} \left(1 - \gamma^\numLay\right) 
\end{equation}
This means that the expected value of the derivative with respect to $\invI$ is independent of the number of neurons of each layer.
It instead depends on the number of layers.
However, Eq.~\eqref{eqn:exp_dWnn_dI1_skip-conn} provides a guideline to avoid blows up of this quantity when deep architecture are employed: indeed, $\stdInit$ and $\act$ must be chosen so that the constant $\gamma = \expW \act'(0)$ is smaller than one. In this case, indeed, we have a finite limit:
\begin{equation}
\Exp{\frac{\partial\neurOut}{\partial \invI}} 
        \xrightarrow[L \to +\infty]{}
        \Wconst \, \expW \, \frac{\gamma}{1-\gamma} 
\end{equation}

\paragraph{Case without skip connections.}

Let us now consider the case when skip connections are not employed, that is when the terms $\weightI{i}$, $\weightII{i}$ and $\weightJ{i}$ for $i \geq 2$ are removed.
By proceeding similarly to above, we have:
\begin{equation}
    \left\{
    \begin{aligned}
        \Exp{\frac{\partial\neur_{1}}{\partial \invI}}
        & = \gamma \ones{\numNeur{1}}, &\\
        \Exp{\frac{\partial\neur_{i}}{\partial \invI}}
        & = \gamma \frac{1}{\numNeur{i-1}} \ones{\numNeur{i}\times\numNeur{i-1}} \Exp{\frac{\partial\neur_{i-1}}{\partial \invI}} , &\quad\text{for } i = 2, \ldots, \numLay\\
        \Exp{\frac{\partial\neurOut}{\partial \invI}} 
        & = \Wconst \, \expW \, \frac{1}{\numNeur{\numLay}} \ones{\numNeur{\numLay}} \cdot \Exp{\frac{\partial\neur_{\numLay}}{\partial \invI}}.&
    \end{aligned}
    \right.
\end{equation}
This entails that
\begin{equation}
    \Exp{\frac{\partial\neur_{i}}{\partial \invI}} = \gamma^i \ones{\numNeur{i}},
\end{equation}
and, in conclusion
\begin{equation}\label{eqn:exp_dWnn_dI1_no-skip-conn}
\Exp{\frac{\partial\neurOut}{\partial \invI}} 
= \Wconst \, \expW \, \gamma^\numLay.
\end{equation}
In this case, the expected derivative is again independent of the number of neurons per layer. 
For deep architectures, choosing $\gamma$ close to one prevents both vanishing and blow-up of the gradient.


\section{Hyperparameters tuning}
\label{sec:hyperparameters_tuning}

In this work, the following hyperparameters of the HNNs are tuned via a grid search over finite sets of candidate values, listed below:

\begin{itemize}
    \item the number of layers, $\numLay \in \{1,2,3\}$;
    \item the number of neurons in each layer, $\numNeur{i} \in \{5, 10, 20\}$;
    \item the presence or absence of skip connections;
    \item the choice of input invariants, either the native set $(\invI, \invII, \J)$ or the isochoric set $(\isoinvI, \isoinvII^{3/2}, \J)$, where the exponent $3/2$ ensures polyconvexity, as discussed in \cref{rem:isochoric_invariant_input};
    \item the initialization standard deviation, $\stdInit \in \{0.05, 0.1, 0.2, 0.4, 0.6, 0.8\}$;
    \item the dimensional scaling constant, $\Wconst \in \{1, 5, 10, 20\}$.
\end{itemize}

The optimal configuration is selected as the one achieving, for $\stdNoise = 10^{-3}$, the lowest training loss.
Owing to the strong structural priors embedded in the HNN architecture, which substantially restrict the admissible hypothesis space, we do not employ cross-validation for hyperparameters tuning, as discussed in the Results.
The selected hyperparameters values are reported in Tab.~\ref{tab:hyperparameters}.

\begin{table}[ht]
\centering
\begin{tabular}{lcccccc}
\toprule
 & \multicolumn{3}{c}{2D specimens} 
 & \multicolumn{3}{c}{3D specimens} \\
\cmidrule(lr){2-4} \cmidrule(lr){5-7}
Hyperparameter 
 & IH & MR & FU 
 & IH & MR & FU \\
\midrule
Number of layers $\numLay$ 
 & 2  & 1 & 1 & 1 & 1 & 2 \\

Number of neurons per layer $\numNeur{i}$ 
 & 5 & 5 & 5 & 5 & 5 & 5 \\

Skip connections 
 & \ding{55} & - & - & - & - & \ding{55} \\

Isochoric input invariants 
 & \ding{55} & \ding{55} & \ding{55} & \ding{55} & \ding{55} & \ding{51} \\

Initialization std. $\stdInit$ 
 & 0.6 & 0.1 & 0.1 & 0.1 & 0.1 & 0.6 \\

Scaling constant $\Wconst$ 
 & 1 & 10 & 10 & 10 & 10 & 1 \\
\bottomrule
\end{tabular}
\caption{Selected hyperparameters for the HNN models.}
\label{tab:hyperparameters}
\end{table}


\section{Basic lemmas on convex functions}

In this section we collect standard definitions and results an convex functions, employed in this work.

\begin{definition}
    A function $f\colon \mathbb{R}^n \to \mathbb{R}$ is convex if for all $\mathbf{x}, \mathbf{y} \in \mathbb{R}^n$ and $\lambda \in [0,1]$:
    \begin{equation} \label{eqn:convexity_definition}
    f(\lambda \mathbf{x} + (1-\lambda) \mathbf{y}) \leq \lambda f(\mathbf{x}) + (1-\lambda) f(\mathbf{y}).
    \end{equation}
\end{definition}

\begin{proposition}\label{prop:convex_nondecreasing_composition}
    Consider a convex function $f\colon \mathbb{R}^n \to \mathbb{R}$ and a convex, non-decreasing function $g \colon \mathbb{R} \to \mathbb{R}$. Then the composition $g \circ f \colon \mathbb{R}^n \to \mathbb{R}$ is also convex.
\end{proposition}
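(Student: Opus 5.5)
The plan is to verify the convexity inequality \eqref{eqn:convexity_definition} for $g\circ f$ directly, in two steps: convexity of $f$ gives one inequality, the monotonicity of $g$ lets us push it through $g$, and convexity of $g$ then finishes. Fix $\mathbf{x},\mathbf{y}\in\mathbb{R}^n$ and $\lambda\in[0,1]$, and write $\mathbf{z}=\lambda\mathbf{x}+(1-\lambda)\mathbf{y}$.

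\emph{Step 1 (convexity of $f$).} By \eqref{eqn:convexity_definition} applied to $f$,
\begin{equation}
f(\mathbf{z}) \leq \lambda f(\mathbf{x}) + (1-\lambda) f(\mathbf{y}).
\end{equation}

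\emph{Step 2 (monotonicity of $g$).} Since $g$ is non-decreasing, applying $g$ to both sides preserves the inequality:
\begin{equation}
g(f(\mathbf{z})) \leq g\bigl(\lambda f(\mathbf{x}) + (1-\lambda) f(\mathbf{y})\bigr).
\end{equation}

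\emph{Step 3 (convexity of $g$).} Applying \eqref{eqn:convexity_definition} to $g$ at the real points $f(\mathbf{x})$ and $f(\mathbf{y})$ gives
\begin{equation}
g\bigl(\lambda f(\mathbf{x}) + (1-\lambda) f(\mathbf{y})\bigr) \leq \lambda g(f(\mathbf{x})) + (1-\lambda) g(f(\mathbf{y})).
\end{equation}

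Chaining the last two inequalities yields exactly \eqref{eqn:convexity_definition} for $g\circ f$, which proves the proposition.

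There is no real obstacle here. The only point to watch is that monotonicity must be applied before convexity of $g$: convexity of $g$ alone cannot transfer the inequality from $f$ through $g$. Since both $f$ and $g$ are real-valued, no extended-value conventions are needed.

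If the paper also intends to apply this to the polyconvex setting, where $f$ is convex in the arguments $(\F,\cof\F,\J)$, the same argument applies verbatim with $\mathbb{R}^n$ replaced by that product space.
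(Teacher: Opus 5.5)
Your proposal is correct and matches the paper's proof: it applies the convexity inequality for $f$, pushes it through $g$ using monotonicity, and then concludes with the convexity of $g$ at the points $f(\mathbf{x})$ and $f(\mathbf{y})$.
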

\begin{proof}
    Let $\mathbf{x}, \mathbf{y} \in \mathbb{R}^n$ and $\lambda \in [0,1]$. Since $g$ is non-decreasing, we can apply it to both sides of \eqref{eqn:convexity_definition}:
    \begin{equation}
        \begin{split}
            g(f(\lambda \mathbf{x} + (1-\lambda) \mathbf{y})) 
            &\leq g(\lambda f(\mathbf{x}) + (1-\lambda) f(\mathbf{y}))\\
            &\leq \lambda g(f(\mathbf{x})) + (1-\lambda) g(f(\mathbf{y})),
        \end{split}
    \end{equation}
    where in the last line we have used the convexity of $g$. This proves that $g \circ f$ is convex.
\end{proof}

\begin{lemma}\label{prop:convex_nondecreasing_nonconst}
Let $f \colon \mathbb{R} \to \mathbb{R}$ be a $C^1$ function which is convex, non-decreasing, and non-constant. Then there exist constants $a>0$ and $b\in\mathbb{R}$ such that
\[
    f(x) \;\geq\; a x - b, \qquad \forall x \in \mathbb{R}.
\]
\end{lemma}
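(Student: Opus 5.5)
The plan is to use the supporting tangent line of $f$ at a point where its derivative is strictly positive. The argument has three steps.

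First, I show that $f'$ is strictly positive somewhere. Since $f$ is non-decreasing and $\Cont{1}$, we have $f'(x)\geq 0$ for all $x$. If $f'$ vanished identically, then by the fundamental theorem of calculus $f$ would be constant, which contradicts the hypothesis. Hence there is a point $x_0\in\mathbb{R}$ with $f'(x_0)>0$.

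Second, I invoke the first-order characterization of convexity for differentiable functions:
\[
    f(x)\geq f(x_0)+f'(x_0)(x-x_0)\qquad\forall x\in\mathbb{R}.
\]
To stay self-contained with respect to the definition \eqref{eqn:convexity_definition}, I derive this directly. For $\lambda\in(0,1]$, convexity applied to the points $x_0$ and $x$ gives
\[
    f(x_0+\lambda(x-x_0))\leq (1-\lambda)f(x_0)+\lambda f(x).
\]
Rearranging yields
\[
    \frac{f(x_0+\lambda(x-x_0))-f(x_0)}{\lambda}\leq f(x)-f(x_0).
\]
Letting $\lambda\to 0^+$, the left-hand side tends to $f'(x_0)(x-x_0)$ by differentiability at $x_0$, which gives the tangent-line inequality.

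Third, I set $a=f'(x_0)>0$ and $b=a x_0-f(x_0)$. Substituting into the tangent-line inequality gives $f(x)\geq ax-b$ for every $x\in\mathbb{R}$, which is the claim.

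The only step needing some care is the first: non-constancy must be converted into a point with strictly positive derivative. Under the $\Cont{1}$ assumption this follows from the fundamental theorem of calculus, so no real obstacle is expected. Without differentiability one could use one-sided derivatives or a secant slope instead, but that is not needed here.
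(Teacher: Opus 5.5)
Your proposal is correct and follows the paper's proof exactly: pick $x_0$ with $f'(x_0)>0$, which exists by non-constancy and $C^1$ regularity, then use the supporting tangent line at $x_0$ given by convexity, so that $a=f'(x_0)$ and $b=a x_0-f(x_0)$. You also correctly justify the two steps the paper takes for granted, namely the existence of $x_0$ via the fundamental theorem of calculus and the tangent-line inequality derived from the definition of convexity.
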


\begin{proof} 
Since $f$ is $C^1$, non-decreasing but non-constant, then there exists $x_0 \in \mathbb{R}$ such that $f'(x_0) > 0$. 
Moreover, as $f$ is convex, we have
\begin{equation}
    f(x) \geq f(x_0) + f'(x_0)(x-x_0), \qquad \forall \, x \in \mathbb{R},
\end{equation}
which proves the thesis.
\end{proof}

\bibliographystyle{plain}
\bibliography{references}

\end{document}